\numberwithin{equation}{section}
\newcommand{\R}{\mathbb{R}}
\newtheorem{theorem}{Theorem}[section]
\newtheorem{lemma}[theorem]{Lemma}
\newtheorem{conjecture}[theorem]{Conjecture}
\newtheorem{proposition}[theorem]{Proposition}
\theoremstyle{definition}
\newtheorem{definition}[theorem]{Definition}
\newtheorem{remark}[theorem]{Remark}
\newcommand{\Extend}[5]{\ext@arrow0099{\arrowfill@#1#2#3}{#4}{#5}}
\begin{document}
\title[Restriction estimate]{Restriction estimates for one class of hypersurfaces with vanishing curvature in $\mathbb{R}^n$}

\author[Z. Li]{Zhuoran Li}
	\address{School of Mathematics and Statistics, Yancheng Teachers University, Yancheng, 224002, China}
	\email{lizr@yctu.edu.cn}

\author[J. Zheng]{Jiqiang Zheng}
	\address{Institute of Applied Physics and Computational Mathematics, Beijing 100088}
	\address{National Key Laboratory of Computational Physics, Beijing 100088, China}
	\email{zhengjiqiang@gmail.com}

\begin{abstract}
In this paper, we study the restriction problem for one class of hypersurfaces with vanishing curvature in $\mathbb{R}^n$ with $n$ being odd. We obtain an $L^2-L^p$ restriction estimate, which is optimal except at the endpoint. Furthermore, we establish an $L^s-L^p$ restriction estimate for these hypersurfaces, which is achieved by improving the known $L^{\infty}$ restriction estimate for hypersurfaces with $\frac{n-1}{2}$ positive principal curvatures and $\frac{n-1}{2}$ negative principal curvatures.
\end{abstract}

\maketitle

\begin{center}
\begin{minipage}{120mm}
   {\small {{\bf Key Words:} Restriction estimate, decomposition, decoupling inequality, polynomial partitioning.}}\\
    {\small {\bf AMS Classification: 42B10}}
\end{minipage}
\end{center}




\section{Introduction and main result}

\noindent

Let $B^{n-1}(0,1)$ be the unit ball centered at the origin in $\mathbb{R}^{n-1}$. We define the extension operator
\begin{equation}\label{extensionoperator}
E_Mf(x):= \int_{B^{n-1}(0,1)}f(\xi)e[x_1\xi_1+\cdot\cdot\cdot+x_{n-1}\xi_{n-1}+x_n(M\xi \cdot \xi)]d\xi,
\end{equation}
where $x=(x_1,\cdots,x_n)\in\R^n,\;\xi=(\xi_1,\cdot\cdot\cdot,\xi_{n-1})\in \mathbb{R}^{n-1},\;e(t):= e^{it}$ for $t\in \mathbb{R}$ and
\begin{equation*}
M=\left(
  \begin{array}{cc}
    I_m & O \\
    O & -I_{n-1-m} \\
  \end{array}
\right).
\end{equation*}

E. M. Stein \cite{Stein} proposed the restriction conjecture in the 1960s. Its adjoint form can be stated as follows:
\begin{conjecture}
For any $p>\frac{2n}{n-1}$ and $p\geq \frac{n+1}{n-1}q'$, there holds
\begin{equation}\label{conj:global}
\Vert E_Mf \Vert_{L^p(\mathbb{R}^n)}\leq C_{p,q}\Vert f \Vert_{L^q(B^{n-1}(0,1))}.
\end{equation}
\end{conjecture}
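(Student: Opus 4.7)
The statement is the Stein restriction conjecture specialized to a quadratic hypersurface of signature $(m,n-1-m)$; in full generality it is open for $n\geq 4$, so the proposal can only describe the framework within which the sharpest known partial results sit and indicate what one would need to push further. By interpolating an endpoint bound $\|E_Mf\|_{L^p(\R^n)}\lesssim \|f\|_{L^\infty}$ for $p>\tfrac{2n}{n-1}$ against the Stein--Tomas $L^2\to L^{2(n+1)/(n-1)}$ estimate (which holds for any full-rank $M$, since only the nondegeneracy of the second fundamental form enters), one recovers the full conjectured range in $q$. The plan is therefore to fix $q=\infty$ and prove the inequality on a ball of radius $R$ with an $\eps$-loss in $R$, then sum dyadically.

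The core strategy combines the Bourgain--Guth broad--narrow paradigm with Guth's polynomial partitioning. I would decompose $E_Mf$ into wave packets of spatial scale $R^{1/2}$, partition frequency space into caps of radius $K^{-1}$, and at each ball of radius $R/K$ separate the contributing caps into a broad piece (spread across many transverse caps) and a narrow piece (dominated by one cap). The broad piece is estimated via the Bennett--Carbery--Tao multilinear restriction theorem, whose transversality hypothesis on the normals is insensitive to the signature of $M$. The narrow piece is rescaled and handled by induction on $R$, closed using the Bourgain--Demeter $\ell^2$-decoupling inequality for the hypersurface; the decoupling exponent $\tfrac{2(n+1)}{n-1}$ is again signature-independent. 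To lower the threshold exponent produced by this scheme, one iterates polynomial partitioning inside the cells of an algebraic variety of controlled degree and deals with the wall contribution through a tangential/transverse dichotomy, with induction on dimension handling the tangential wave packets via a lower-dimensional restriction estimate on the wall.

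The principal obstacle, and the reason the full conjecture remains open in the maximally mixed signature $m=\tfrac{n-1}{2}$ most relevant to this paper, is the control of tangential wave packets. The hypersurface contains straight-line rulings along the isotropic directions of $M\xi\cdot\xi$, so wave packets travelling along those directions concentrate strongly and are only weakly dispersed by the quadratic phase, which weakens the refined decoupling gain one needs to close the induction. Equivalently, the Kakeya-type compression heuristic that is available on the elliptic paraboloid---that families of $R^{1/2}\times R$ tubes with well-separated directions cannot cluster---fails in the precise form required and must be replaced by estimates that exploit the hyperbolic geometry. Closing the gap all the way to the conjectured endpoint $p=\tfrac{2n}{n-1}$ appears to demand a genuinely new ingredient; the results announced in the abstract should accordingly be understood as the best partial progress currently reachable by refining the broad estimate (through an improved $L^\infty$ restriction bound) and the induction-on-scales closure.
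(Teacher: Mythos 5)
The statement you were asked to prove is Stein's restriction conjecture itself, which the paper merely records as background and explicitly declares open for $n\geq 3$; there is no proof in the paper to compare against, and for the mixed-signature case $m=\tfrac{n-1}{2}$ relevant here the paper states that Stein--Tomas is still the best known result. You correctly recognized that no proof can be supplied and instead gave an accurate survey of the standard machinery (Bourgain--Guth broad--narrow reduction, wave packets, polynomial partitioning, multilinear restriction, $\ell^2$-decoupling) and of the genuine obstruction in the hyperbolic case, namely the straight-line rulings along the isotropic directions of $M\xi\cdot\xi$ and the resulting failure of the Kakeya-type compression estimates; this matches the paper's own framing, and your closing remark that progress comes from improving the $L^\infty$ bound for such hypersurfaces is exactly what the paper does in its Proposition 3.1 via polynomial partitioning combined with the polynomial Wolff axioms. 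One small imprecision: interpolating an $L^\infty\to L^p$ bound for $p>\tfrac{2n}{n-1}$ directly with Stein--Tomas does \emph{not} recover the full conjectured $(p,q)$ range --- the interpolation segment misses a neighborhood of the corner $p=q=\tfrac{2n}{n-1}$ --- and the standard reduction to $q=\infty$ additionally requires a Nikishin--Maurey--Pisier factorization argument exploiting the symmetries of the surface. Since that reduction is classical and you are only sketching context for an open problem, this does not affect the substance of your answer.
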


The conjecture in $\mathbb{R}^2$ was proven by Fefferman \cite{F70} and Zygmund \cite{Z74} independently. In $\mathbb{R}^n\;(n\geq 3)$ the conjecture remains open. For the case $m=n-1$, we refer to \cite{ BG, Guth1, Guth2, HR, HZ, Tao03, Wang, WW22} for some partial progress. For the case $1\leq m \leq n-2$, one can see the results in \cite{Barron, BMV20a,GO20,HI, CL17,Lee06, Stova19, Vargas05} and the references therein. In particular, when $n\geq 5$ is odd and $m=\frac{n-1}{2}$, Stein-Tomas theorem is the best known result.

By the standard $\varepsilon$-removal argument in \cite{BG, Tao99}, \eqref{conj:global} can be reduced to a local version as follows:
\begin{conjecture}[Local version on restriction conjecture]\label{conj:local}
Let $p>\frac{2n}{n-1}$ and $p\geq \frac{n+1}{n-1}q'$. Then, for any $\varepsilon>0$, there exists a positive constant $C(\varepsilon)$ such that
\begin{equation}\label{equ:parloc}
  \| E_Mf \|_{L^p(B_R)}\leq C(\varepsilon)R^{\varepsilon}\|f\|_{L^q(B^{n-1}(0,1))},
\end{equation}
where $B_R$ denotes an arbitrary ball with radius $R$ in $\mathbb{R}^n$.
\end{conjecture}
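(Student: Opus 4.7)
The plan is to attack Conjecture~\ref{conj:local} by combining Guth's polynomial partitioning method with a broad-narrow decomposition and a multilinear restriction estimate adapted to the indefinite quadratic form $M\xi\cdot\xi$. First I would reduce to the $q=2$ endpoint via H\"older interpolation with the trivial $L^1\to L^\infty$ bound, and set up an induction on the radius $R$, using the Stein-Tomas theorem as the base case. The induction hypothesis will be that \eqref{equ:parloc} holds at some exponent $p$ on all balls of radius $R'<R$, and the goal at each step is to propagate it to scale $R$.

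Second, at scale $R$ I would fix a broadness parameter $K$ and decompose $B^{n-1}(0,1)$ into caps $\tau$ of radius $K^{-1}$, writing $f=\sum_\tau f_\tau$. For each $x\in B_R$, split into broad points, where $|E_Mf(x)|$ is controlled by a $k$-linear expression in caps $\tau_1,\dots,\tau_k$ with pairwise transverse normals on the signature-$(m,n-1-m)$ quadric, and narrow points, where the mass concentrates in a few caps aligned with some low-dimensional variety.

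At the broad points I would apply the $k$-linear restriction estimate of Bennett-Carbery-Tao (or its refinement for indefinite signatures), losing at most $R^\varepsilon K^{O(1)}$. At the narrow points I would invoke Guth's polynomial partitioning: pick a polynomial $P$ of degree $D\approx\log R$ partitioning $B_R$ into cells on which only a few wave packets contribute, control the cellular contribution via the induction hypothesis at scale $R/D$, and handle the wall $\{P=0\}$ by iterating the partitioning and using decoupling of Bourgain-Demeter for the hyperbolic paraboloid to close the induction.

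The main obstacle is running the wall/cell argument in the indefinite signature setting. For the elliptic paraboloid ($m=n-1$) one exploits that wave packets clustering near a low-degree variety contradict transversality of the Gauss map, yielding a clean dimensional reduction. When $M$ has mixed signature, wave packets admit extra directions in which they can concentrate along a variety without violating transversality, so the induction hypothesis is harder to invoke at the right scale. Overcoming this loss is precisely the gap that keeps Conjecture~\ref{conj:local} open for $1\le m\le n-2$, and any honest attempt must introduce new geometric or combinatorial input beyond what has been developed in the elliptic case.
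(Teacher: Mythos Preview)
The statement you are addressing is not a theorem in the paper but an open \emph{conjecture}: the authors explicitly write that in $\mathbb{R}^n$ with $n\ge 3$ the restriction conjecture remains unresolved, and Conjecture~\ref{conj:local} is recorded only as the local reformulation of Conjecture~1.1 via the standard $\varepsilon$-removal argument. There is therefore no proof in the paper to compare your proposal against.

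Your write-up is not a proof either, and to your credit you say so in the final paragraph. The broad--narrow plus polynomial partitioning scheme you describe is indeed the current state-of-the-art machinery, but it does not yield the full conjectured range $p>\tfrac{2n}{n-1}$ even in the elliptic case $m=n-1$; the best known exponents (Guth, Hickman--Rogers, Wang, etc.) still fall short. So the gap you identify for mixed signature is real, but there is a prior gap you gloss over: the cellular/transverse/tangent trichotomy, combined with $k$-linear restriction and decoupling, does not close the induction all the way down to $p=\tfrac{2n}{n-1}$ for \emph{any} signature. Your reduction to $q=2$ via interpolation with the trivial bound is also lossy: Stein--Tomas already gives $q=2$ only for $p\ge\tfrac{2(n+1)}{n-1}$, and pushing below that requires genuinely using $q<2$, which the $L^2$-based wave packet framework does not automatically supply.

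In short, the paper offers no proof because none exists; your outline correctly sketches the dominant approach and its known obstructions, but it is a research program rather than a proof.
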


How about the restriction problem for general hypersurfaces with vanishing curvature? Stein \cite{Stein93} first proposed the restriction problem for the hypersurfaces of finite type in $\mathbb{R}^n$. In this direction, I. Ikromov, M. Kempe and D. M\"{u}ller \cite{IKM10, IM11, IM15} obtained the sharp range of Stein-Tomas type restriction estimate for a large class of smooth degenerate surfaces in $\mathbb{R}^3$ including all analytic cases. Recently, Buschenhenke-M\"{u}ller-Vargas \cite{BMV} study the $L^s-L^p$ restriction estimates for certain surfaces of finite type in $\R^3$ via a bilinear method. By developing rescaling techniques associated with such finite type surfaces, the authors of \cite{LMZ} improve the results of \cite{BMV} in the $L^{\infty}-L^p$ setting.

In this article, we consider the hypersurface in $\mathbb{R}^{2k+1}$ given by
\begin{equation*}
\Sigma^{m}:=\Big\{(\xi,\eta,\vert \xi \vert^m-\vert \eta \vert^m):\;(\xi,\eta)\in B^k_1\times B^k_1\Big\},
\end{equation*}
where $m\geq 4$ is an even number, $k\geq 1$ is an integer and $B^k_1$ denotes the unit ball centered at the origin in $\mathbb{R}^k$. For each subset $Q\subset B^k_1\times B^k_1$, we denote the Fourier extension operator associated with $\Sigma^{m}$ by
\begin{equation*}
  \mathcal E^{m}_{Q}g(x):= \int_Q g(\xi,\eta)e\big(x'\cdot \xi+x''\cdot\eta+ x_{2k+1}(\vert \xi \vert^m-\vert \eta \vert^m)\big)d\xi d\eta,
\end{equation*}
where
\[x:=(x',x'',x_{2k+1}),\;x'\in \mathbb{R}^k,\;x'' \in \mathbb{R}^k,\;x_{2k+1}\in\R.\]
Since $m\geq 4$ is an even number, $\vert \xi \vert^m-\vert \eta \vert^m$ is a real homogenous polynomial on $\mathbb{R}^{2k}$ whose Hessian matrix has rank at least $k$ whenever $(\xi,\eta)\neq (0,0)$. Note that $k>\frac{4k}{m}$ when $m>4$. We can deduce decay for the Fourier transform of measures carried on the hypersurface $\Sigma^{m}$ from classical results in \cite{Stein93}
\[\vert \widehat{d\mu}(x)\vert\lesssim (1+\vert x \vert)^{-\frac{2k}{m}},\]
where $m\geq 6$ is an even number. By the classical result by Greenleaf \cite{Gl81} on Stein-Tomas estimates, we derive
\begin{equation}\label{equ:m6}
\|\mathcal E^{m}_{B^k_1\times B^k_1}g\|_{L^{p}(\mathbb{R}^{2k+1})}\lesssim \|g\|_{L^2(B^k_1\times B^k_1)},\quad  p\geq \frac{2k+m}{k},
\end{equation}
where $m \geq 6$ is an even number.

The range of exponent $p$ in \eqref{equ:m6} is sharp. To see it, we construct a Knapp example. Let $K$ be a large number with $1\ll K \ll R^{\varepsilon}$. Taking $g=\chi_{G}$ to be the characteristic function of the set $G$ with
\begin{equation*}
G:=\{(\xi,\eta)\in \mathbb{R}^k\times \mathbb{R}^k:\;\vert \xi \vert \leq R^{-1/m},\;\vert \eta \vert \leq R^{-1/m}\},
\end{equation*}
then, we have
\begin{equation*}
\vert \mathcal E^{m}_{G}g(x) \vert = \Big\vert\int_{G}e\Big(x'\cdot \xi+x''\cdot \eta+x_{2k+1}(\vert \xi \vert^m-\vert \eta \vert^m)\Big)d\xi d\eta \Big\vert \gtrsim R^{-\frac{2k}{m}}
\end{equation*}
provided that $x\in G^{*}$ with
\begin{equation*}
G^{*}:=\{x\in \mathbb{R}^{2k+1}:\;\vert x' \vert \lesssim R^{1/m},\;\vert x'' \vert \lesssim R^{1/m},\;\vert x_{2k+1} \vert \lesssim R\}.
\end{equation*}
Assume that the local version of \eqref{equ:m6}
\begin{equation}\label{equ:goalm}
\|\mathcal E_{B^k_1\times B^k_1}g\|_{L^{p}(B_R)}\leq C(\varepsilon)R^{\varepsilon}\|g\|_{L^2(B^k_1\times B^k_1)}
\end{equation}
holds for certain $p$, we deduce that the following inequality must hold
\begin{equation}
R^{-\frac{2k}{m}}R^{(1+\frac{2k}{m})\frac{1}{p}}\lesssim_{\varepsilon}R^{-\frac{k}{m}+\varepsilon},
\end{equation}
which implies $p\geq \frac{2k+m}{k}$.

Now we focus on the case $m=4$ and abbreviate the corresponding extension operator by
$\mathcal E_{Q}$. Our first result is the following $L^2-L^p$ restriction estimate.

\begin{theorem}\label{thm:main}
Let $p>p_c:=\frac{2k+4}{k}$ and $k\geq 2$. Then, for any $\varepsilon>0$, there exists a positive constant $C(\varepsilon)$ such that
\begin{equation}\label{equ:goalm}
\|\mathcal E_{B^k_1\times B^k_1}g\|_{L^{p}(B_R)}\leq C(\varepsilon)R^{\varepsilon}\|g\|_{L^2(B^k_1\times B^k_1)},
\end{equation}
where $B_R$ denotes an arbitrary ball with radius $R$ in $\mathbb{R}^{2k+1}$.
\end{theorem}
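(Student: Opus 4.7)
The plan is to decompose the frequency support of $g$ into dyadic annular pieces indexed by $|\xi|$ and $|\eta|$, handle the central Knapp core by direct Plancherel, and on each annular piece exploit the product structure of the phase $|\xi|^4-|\eta|^4$ by combining a Hausdorff--Young bound in the variable with smaller frequency support with a Stein--Tomas restriction estimate for the lower-dimensional convex finite-type surface $\Sigma_1=\{(\xi,|\xi|^4)\}\subset\R^{k+1}$ in the other. The per-piece bounds recombine via $L^2$-orthogonality, and the strict inequality $p>p_c$ leaves room to spare.

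Set $J=\lfloor\tfrac14\log_2 R\rfloor$ and partition $B_1^k\times B_1^k$ into the Knapp core $K=\{\max(|\xi|,|\eta|)\le R^{-1/4}\}$ and dyadic blocks $A_{j,l}=\{|\xi|\sim 2^{-j},\,|\eta|\sim 2^{-l}\}$, $(j,l)\in[0,J]^2$. On $K$ the phase $x_{2k+1}(|\xi|^4-|\eta|^4)$ stays $O(1)$ throughout $B_R$, so $\mathcal{E}_Kg$ agrees, up to a bounded oscillation factor, with the $(x',x'')$-Fourier transform of $g\chi_K$. Combining Plancherel in $(x',x'')$, the trivial bound $\|\mathcal{E}_Kg\|_\infty\lesssim R^{-k/4}\|g\|_2$, and $L^2$--$L^\infty$ interpolation in the variable $x_{2k+1}\in[-R,R]$ yields
\[
\|\mathcal{E}_Kg\|_{L^p(B_R)}\;\lesssim\;R^{(k+2)/(2p)-k/4}\,\|g\|_2\;\le\;R^\varepsilon\,\|g\|_2\qquad(p>p_c).
\]

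For a block $A_{j,l}$ with $j\le l$ (the case $l<j$ is symmetric), introduce the partial extension
\[
F(x',t,\eta)\;:=\;\int_{|\xi|\sim 2^{-j}}g(\xi,\eta)\,e^{i(x'\cdot\xi+t|\xi|^4)}\,d\xi,
\]
so that $\mathcal{E}_{A_{j,l}}g(x',x'',t)=\int F(x',t,\eta)\,e^{i(x''\cdot\eta-t|\eta|^4)}\,d\eta$ is, for fixed $(x',t)$, the $x''$-Fourier transform of a function supported on $|\eta|\sim 2^{-l}$. Hausdorff--Young and H\"older in $\eta$ give $\|\mathcal{E}_{A_{j,l}}g(x',\cdot,t)\|_{L^p_{x''}}\lesssim 2^{-lk(1/2-1/p)}\|F(x',t,\cdot)\|_{L^2_\eta}$. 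Taking $L^p_{x',t}$-norms, applying Minkowski's inequality $\|F\|_{L^p_{x',t}L^2_\eta}\le\|F\|_{L^2_\eta L^p_{x',t}}$, and invoking Stein--Tomas for $\Sigma_1$ on the annular piece $|\xi|\sim 2^{-j}$---obtained by rescaling $\xi=2^{-j}\tilde\xi$ to a unit-scale convex piece and applying the classical Stein--Tomas theorem, valid for $p\ge p_c$---produces
\[
\|\mathcal{E}_{A_{j,l}}g\|_{L^p(B_R)}\;\lesssim\;2^{-lk(1/2-1/p)+j(-k/2+(k+4)/p)}\,\|g\chi_{A_{j,l}}\|_2.
\]
At $p=p_c$ the exponent equals $k(j-l)/(k+2)\le 0$ for $j\le l$; for $p>p_c$ one checks that the sum of the coefficients of $j$ and $l$ equals $-k+(2k+4)/p<0$, so the bound is geometrically summable in both indices. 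Summing via Cauchy--Schwarz together with the $L^2$-orthogonality $\sum_{j,l}\|g\chi_{A_{j,l}}\|_2^2\le\|g\|_2^2$ (and adding the symmetric estimate for $l<j$) then combines with the Knapp bound to deliver the claimed estimate.

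The main technical subtlety concerns the off-diagonal regime $|j-l|\gg1$. A direct anisotropic rescaling of $A_{j,l}$ would produce the surface $\tilde\Sigma_{j,l}=\{(\tilde\xi,\tilde\eta,|\tilde\xi|^4-2^{-4(l-j)}|\tilde\eta|^4)\}$, on which naive Stein--Tomas comes with a curvature factor of order $2^{k|j-l|}$---coming from the stationary-phase bound $|\widehat{d\sigma_{\tilde\Sigma_{j,l}}}(x)|\lesssim 2^{2k(l-j)}(1+|x|)^{-k}$---that would swamp the geometric gain from the rescaling. The product-type argument above sidesteps this difficulty by treating the small-frequency-support variable with a Plancherel/Hausdorff--Young bound and invoking the full-strength Stein--Tomas theorem only for the lower-dimensional convex surface $\Sigma_1$, where the sharp endpoint $p_c$ is classical. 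This is the key observation that makes the per-piece contribution geometrically summable and produces the full theorem.
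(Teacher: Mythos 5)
Your argument is correct in its essentials, but it takes a genuinely different and more elementary route than the paper. The paper splits $B^k_1\times B^k_1$ into the region where both $|\xi|,|\eta|\gtrsim K^{-1/4}$ (handled by Stein--Tomas for the signature-$(k,k)$ piece), the corner $|\xi|,|\eta|\lesssim K^{-1/4}$ (handled by parabolic rescaling and induction on scales), and the mixed regions, where it runs a Bourgain--Demeter $\ell^2$ decoupling at scale $K$ in the $\xi$-variable, rescales each cap to an ``admissible'' phase, and closes a second nested induction on scales (Proposition 2.3). You instead decompose into dyadic bi-annuli $A_{j,l}$ and prove a clean tensor-type bound on each block: Hausdorff--Young plus H\"older in the variable of smaller frequency support (which costs only the measure $2^{-lk(1/2-1/p)}$ and ignores the curvature of $-|\eta|^4$, harmlessly, since the unimodular factor $e^{-it|\eta|^4}$ is absorbed before Hausdorff--Young), Minkowski to exchange $L^p_{x',t}$ and $L^2_\eta$, and the rescaled $(k+1)$-dimensional Stein--Tomas theorem for the convex annular piece of $\{(\xi,|\xi|^4)\}$, whose critical exponent $\tfrac{2(k+2)}{k}$ coincides exactly with $p_c$. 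I have checked the exponent bookkeeping: writing $l=j+m$ with $m\ge0$, the per-block exponent is $j\bigl(-k+\tfrac{2k+4}{p}\bigr)+m\bigl(-\tfrac k2+\tfrac kp\bigr)$, both coefficients being negative for $p>p_c$, so the blocks sum geometrically via Cauchy--Schwarz against $\sum_{j,l}\|g\chi_{A_{j,l}}\|_2^2\le\|g\|_2^2$. Your approach buys a global-in-space estimate for $p>p_c$ with no $R^\varepsilon$ loss (and only a $(\log R)^{1/2}$ loss at the endpoint on $B_R$), and it avoids decoupling and induction on scales entirely; the paper's heavier machinery is what is actually needed for the $L^\infty$-based Theorem 1.6, where one must beat Stein--Tomas on the nondegenerate part, but for Theorem 1.4 your route is simpler and sharper.

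Two small repairs are needed. First, your decomposition does not cover $\Omega$: the regions $\{|\xi|\sim 2^{-j},\ |\eta|\le R^{-1/4}\}$ with $j<J$ (and the symmetric ones) lie in neither the core $K$ nor any $A_{j,l}$ with $(j,l)\in[0,J]^2$. Fix this by letting the $\eta$-factor of the extreme blocks be the full ball $B^k(0,R^{-1/4})$; the Hausdorff--Young/H\"older step uses only the measure of the $\eta$-support, so the same bound holds with $2^{-l}$ replaced by $R^{-1/4}$, and the summation is unchanged. Second, the justification of the core estimate via ``the phase stays $O(1)$'' is not quite the right mechanism: the factor $e^{ix_{2k+1}(|\xi|^4-|\eta|^4)}$ cannot be pulled out of the integral. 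The correct (and simpler) argument is that for each fixed $x_{2k+1}$ this factor is unimodular, so Plancherel in $(x',x'')$ gives $\|\mathcal E_Kg(\cdot,\cdot,x_{2k+1})\|_{L^2_{x',x''}}\lesssim\|g\|_2$ for every $x_{2k+1}$; combining with $\|\mathcal E_Kg\|_{L^\infty}\le |K|^{1/2}\|g\|_2\sim R^{-k/4}\|g\|_2$, interpolating, and integrating over $|x_{2k+1}|\le R$ yields exactly your claimed factor $R^{\frac{k+2}{2p}-\frac k4}$, which is $\le 1$ precisely for $p\ge p_c$.
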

The range of $p$ in Theorem \ref{thm:main} is optimal except at the endpoint.

\begin{remark}\label{geometryRek}
To analyse the extension operator $\mathcal E$, we need to partition the hypersurface $\Sigma$ into small pieces in an appropriate manner. By a direct calculation, we see that the Gaussian curvature of $\Sigma$ vanishes when $\vert \xi \vert=0$ or $\vert \eta \vert=0$. We observe that the hypersurface has nonzero Gaussian curvature if both $\vert \xi \vert$ and $\vert \eta \vert$ are away from zero. In this region, we can adopt Stein-Tomas theorem for hypersurfaces with nonzero Gaussian curvature. Then it reduces to the case $\vert \xi \vert \ll 1$ or $\vert \eta \vert \ll 1$. In other words, we only need to consider small neighborhoods of the submanifolds $\{(\xi,0,\vert \xi \vert^4):\;\xi \in B^k_1\}$ and $\{(0,\eta,\vert \eta \vert^4):\;\eta \in B^k_1\}$ in the hypersurface $\Sigma$. We will adapt the reduction of dimension arguments in \cite{LMZ, LiZheng} to these small neighborhoods.
\end{remark}

Our second result is the following $L^s-L^p$ restriction estimate.

\begin{theorem}\label{mainthm2}
Let $k\geq 2$. Then, for any $\varepsilon>0$, there exists a positive constant $C(\varepsilon)$ such that
\begin{equation}\label{equ:goalthm2}
\|\mathcal E_{B^k_1\times B^k_1}g\|_{L^{p}(B_R)}\leq C(\varepsilon)R^{\varepsilon}\|g\|_{L^{s}(B^k_1\times B^k_1)}
\end{equation}
for $p>\frac{(k+2)(4k^2+6k+1)s}{(k+2)(2k^2+k)s-4k^2-3k}$ and $2< s < \infty$.
\end{theorem}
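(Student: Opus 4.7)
\medskip

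\noindent\textbf{Proof proposal for Theorem \ref{mainthm2}.}

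The plan is to interpolate Theorem \ref{thm:main} with a sharpened $L^{\infty}$--$L^{q}$ restriction bound for $\mathcal E_{B_{1}^{k}\times B_{1}^{k}}$. Setting $\tfrac{1}{s}=\tfrac{1-\theta}{2}$, $\theta\in[0,1]$, a direct computation shows that the range of $p$ claimed in Theorem \ref{mainthm2} is exactly the open Riesz--Thorin interpolation segment between the Stein--Tomas-type endpoint $\bigl(\tfrac{1}{2},\tfrac{k}{2(k+2)}\bigr)$ of Theorem \ref{thm:main} (attained at $s=2$, $p=p_{c}$) and the point $\bigl(0,\tfrac{k(2k+1)}{4k^{2}+6k+1}\bigr)$. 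Thus, up to an $R^{\varepsilon}$-loss absorbed by complex interpolation, Theorem \ref{mainthm2} reduces to proving the $L^{\infty}$--$L^{q}$ estimate
\begin{equation*}
\|\mathcal E_{B_{1}^{k}\times B_{1}^{k}}g\|_{L^{q}(B_{R})}\le C(\varepsilon)R^{\varepsilon}\|g\|_{L^{\infty}(B_{1}^{k}\times B_{1}^{k})},\qquad q>q_{*}:=\tfrac{4k^{2}+6k+1}{k(2k+1)}.
\end{equation*}
Note that $q_{*}=\tfrac{2k+2}{k}-\tfrac{1}{k(2k+1)}$ lies strictly below the Stein--Tomas exponent $\tfrac{2k+2}{k}$ valid for a nondegenerate $(k,k)$-signature hypersurface in $\mathbb{R}^{2k+1}$, so what is needed is a quantitative improvement of the Stein--Tomas $L^{\infty}$ exponent by $\tfrac{1}{k(2k+1)}$.

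Following Remark \ref{geometryRek}, I would decompose $B_{1}^{k}\times B_{1}^{k}$ into the nondegenerate region $\{|\xi|\sim 1,\,|\eta|\sim 1\}$ and the two degenerate strips $\{|\xi|\ll 1\}$, $\{|\eta|\ll 1\}$. A direct Hessian computation of $|\xi|^{4}-|\eta|^{4}$ shows that on the nondegenerate region the $2k$ principal curvatures of $\Sigma$ are all of size $\sim 1$ and split into $k$ positive and $k$ negative eigenvalues. The degenerate strips are absorbed by a dyadic decomposition in $|\xi|$ (resp.\ $|\eta|$), parabolic rescaling at each dyadic scale, and the reduction-of-dimension arguments of \cite{LMZ, LiZheng}; at each dyadic scale the problem reduces to an extension estimate on a thin neighborhood of a lower-dimensional paraboloid, where the required $L^{\infty}$ bound is either trivial or known. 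The core task is therefore to prove the above $L^{\infty}$--$L^{q}$ restriction estimate for a generic smooth hypersurface in $\mathbb{R}^{2k+1}$ with $(k,k)$-signature and principal curvatures of size $\sim 1$.

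For this improved $L^{\infty}$ estimate, the intended approach is a Guth-style polynomial partitioning induction on the radius $R$, combined with a broad/narrow decomposition of the frequency ball. On broad caps the $k$-broad multilinear restriction estimate of Bennett--Carbery--Tao (in the form used by Guth and by Hickman--Rogers in the elliptic case) controls the extension; on narrow caps one invokes the $\ell^{2}$-decoupling inequality of Bourgain--Demeter, applicable since the Gaussian curvature is nonvanishing in absolute value. The target exponent $q_{*}$ emerges from optimizing the trade-off between these two contributions at the critical spatial scale. The main obstacle lies precisely in this step: adapting polynomial partitioning, developed primarily for elliptic hypersurfaces, to the indefinite $(k,k)$-signature setting. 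One must choose the polynomial partition to respect the mixed signature, handle wave packets aligned with null directions of the Hessian, and verify that the Bourgain--Demeter decoupling input is sharp enough in the narrow range being considered. The precise numerology $\tfrac{4k^{2}+6k+1}{k(2k+1)}$ is the outcome of balancing the $k$-broad gain against the decoupling gain at the optimal scale; once this $L^{\infty}$ estimate is in hand, complex interpolation with Theorem \ref{thm:main} yields Theorem \ref{mainthm2} for every $s\in(2,\infty)$.
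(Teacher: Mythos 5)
Your reduction is exactly the paper's: the interpolation endpoints $\bigl(\tfrac12,\tfrac{k}{2(k+2)}\bigr)$ and $\bigl(0,\tfrac{k(2k+1)}{4k^2+6k+1}\bigr)$ are computed correctly, and the target $L^\infty$--$L^q$ estimate with $q_*=\tfrac{2k+2}{k}-\tfrac{1}{k(2k+1)}$ coincides with the paper's inequality \eqref{eq:reducedmain2}. The outer decomposition (nondegenerate region plus two degenerate strips handled by dyadic decomposition, rescaling to admissible phases, and induction on scales) also matches Section 3 of the paper, up to the detail that after rescaling the $\eta$-variable is still degenerate, so the strips require a further induction (Proposition \ref{newprop3}) rather than being ``trivial or known.''

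The genuine gap is in the core step: the improved $L^\infty$ estimate for a nondegenerate hypersurface of signature $(k,k)$, which is Proposition \ref{mainprop0} in the paper. You propose to obtain $q_*$ from a broad/narrow dichotomy, using the $k$-broad multilinear estimate on broad caps and Bourgain--Demeter decoupling on narrow caps, and you yourself flag the adaptation of this machinery to mixed signature as ``the main obstacle'' without resolving it; the exponent $\tfrac{4k^2+6k+1}{k(2k+1)}$ is asserted to come from ``balancing'' these gains but no computation is given. This route is problematic precisely because for $(k,k)$-signature the surface contains $k$-planes, so the narrow (lower-dimensional) contributions do not decouple the way they do in the elliptic case, and the $k$-broad reduction itself degenerates along null directions of the Hessian — these are exactly the difficulties that have kept Stein--Tomas as the best known bound in this setting. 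The paper avoids the broad/narrow reduction entirely: it partitions the plain $L^p$ norm (not a broad norm) polynomially, runs the cellular and transversal cases by induction as usual, and in the tangential case combines the trivial $L^2$ bound on the wall, the Katz--Rogers Polynomial Wolff Axioms (which confine $\mathrm{supp}\,f_{k,tang}$ to $\lesssim R^{\frac{n-2}{2}+O(\delta)}$ caps, giving $\Vert f_{k,tang}\Vert_{L^2}\lesssim R^{-1/4+O(\delta)}\max_\theta\Vert f_\theta\Vert_{L^2_{avg}(\theta)}$), and interpolation with Stein--Tomas; the condition $\tfrac{n^2+n-1}{2(n-1)}-\tfrac{pn}{4}<0$ is what produces $p>\tfrac{2(n+1)}{n-1}-\tfrac{2}{n(n-1)}$. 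Since this signature-insensitive mechanism is the paper's essential new input and your proposal neither supplies it nor a working substitute, the proof is incomplete at its decisive point.
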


By Theorem \ref{thm:main} and interpolation, Theorem \ref{mainthm2} follows from the  $L^{\infty}-L^p$ estimate:
\begin{equation}\label{eq:reducedmain2}
\|\mathcal E_{B^k_1\times B^k_1}g\|_{L^{p}(B_R)}\leq C(\varepsilon)R^{\varepsilon}\|g\|_{L^{\infty}(B^k_1\times B^k_1)},
\end{equation}
where $p>p_c-\frac{4k+3}{k(2k+1)}$ and $p_c$ is defined as in Theorem \ref{thm:main}.

\vskip 0.2in

The paper is organized as follows. In Section 2, we give the proof of Theorem \ref{thm:main}. In Section 3, we prove \eqref{eq:reducedmain2}.

\vskip 0.2in

{\bf Notations:} For nonnegative quantities $X$ and $Y$, we will write $X\lesssim Y$ to denote the estimate $X\leq C Y$ for some
large constant $C$ which may vary from line to line and depend on various parameters. If $X\lesssim Y\lesssim X$, we simply write $X\sim Y$. Dependence of implicit constants on the power $p$ or the dimension will be suppressed; dependence on additional parameters will be indicated by subscripts. For example, $X\lesssim_u Y$ indicates $X\leq CY$ for some $C=C(u)$. For any set $E \subset \mathbb{R}^d$, we use $\chi_{E}$ to denote the characteristic function on $E$. Usually, Fourier transform on $\mathbb{R}^d$ is defined by
\begin{equation*}
\widehat{f}(\xi):=(2\pi)^{-d}\int_{\mathbb{R}^d}e^{-ix\cdot \xi}f(x)\;dx.
\end{equation*}

\section{Proof of Theorem \ref{thm:main}}

In this section, we prove Theorem \ref{thm:main}. Let $Q_p(R)$ denote the least number such that
\begin{equation}\label{equ:defqpr}
  \|\mathcal{E}_{\Omega}g\|_{L^p(B_R)}\leq Q_p(R)\|g\|_{L^2(\Omega)},
\end{equation}
for all $g \in L^2(\Omega)$. Here we use $\Omega$ to denote $B^k_1\times B^k_1$.

Let $K=R^{\varepsilon^{100k}}$. We divide $\Omega$ into $\bigcup\limits_{j=0}^3\Omega_j$, as in Figure 1 below.

\begin{center}
 \begin{tikzpicture}[scale=0.6]

\draw[->] (-0.2,0) -- (4,0) node[anchor=north]{};
\draw[->] (0,-0.2) -- (0,4)  node[anchor=east]{};

\draw (-0.15,0) node[anchor=north] {O}
(3.2,0) node[anchor=north] {$1$}
(1,0) node[anchor=north] {$K^{-\frac14}$};

\draw (0,3.2) node[anchor=east] {1}
      (0,1) node[anchor=east] {$K^{-\frac14}$};

\begin{scope}  
    \clip (0,1.5)--(1.5,0)--(-0.15,0);
   \draw (0,0) circle (1);
\end{scope}

\begin{scope}  
    \clip (0,5)--(5,0)--(-0.15,0);
   \draw (0,0) circle (3.2);
\end{scope}

\path (1.5,-1.3) node(caption){$\xi$-space };  


 \draw[->] (5.8,0) -- (10,0) node[anchor=north] {};
\draw[->] (6,-0.2) -- (6,4)  node[anchor=east] {};
 \draw (5.65,0) node[anchor=north] {O}
 (9.2,0) node[anchor=north] {1}
 (7,0) node[anchor=north] {\small $K^{-\frac14}$};

 \draw (6,3.2) node[anchor=east] {1}
      (6,1) node[anchor=east] {${K^{-\frac14}}$};

\begin{scope}  
    \clip (5.85,0)--(10,0)--(7,9);
   \draw (6,0) circle (1);
\end{scope}

\begin{scope}  
    \clip (5.65,0)--(10,0)--(6.5,9);
   \draw (6,0) circle (3.2);
\end{scope}

\path (7.5,-1.3) node(caption){$\eta$-space };  


 \draw[->] (11.8,0) -- (16,0) node[anchor=north] {$|\xi|$};
\draw[->] (12,-0.2) -- (12,4)  node[anchor=east] {$|\eta|$};

  \draw (11.65,0) node[anchor=north] {O}
 (15.2,0) node[anchor=north] {1}
(12.8,0) node[anchor=north] {${K^{-\frac14}}$}
(14,2.6) node[anchor=north] {$\Omega_0$}
(14,0.8) node[anchor=north] {$\Omega_1$}
(12.5,0.8) node[anchor=north] {$\Omega_3$}
(12.5,2.6) node[anchor=north] {$\Omega_2$};

 \draw (12,3.2) node[anchor=east] {1}
 (12,1) node[anchor=east] {${K^{-\frac14}}$};

  \draw[thick] (12,3.2) -- (15.2,3.2)
             (15.2,0) -- (15.2,3.2)
             (12,1) -- (15.2,1)
             (13,0) -- (13,3.2);

\path (5.5,-2.3) node(caption){Figure 1 };  

\end{tikzpicture}
 \end{center}

where
\begin{align*}
&\Omega_0:=A^k\times A^k,\quad
\Omega_1:=A^k\times B^k_{K^{-1/4}},\\
&\Omega_2:=B^k_{K^{-1/4}}\times A^k,\quad
\Omega_3:=B^k_{K^{-1/4}}\times B^k_{K^{-1/4}},\\
&A^k:=B^k_1\setminus B^k_{K^{-1/4}}.
\end{align*}

In this setting, we have
\begin{equation}\label{equ:egr03}
   \|\mathcal{E}_{\Omega}g\|_{L^p(B_R)}\leq \sum_{j=0}^{3}\big\|\mathcal E_{\Omega_j}g \big\|_{L^p(B_R)}.
\end{equation}
Since the hypersurface corresponding to the region $\Omega_0$ possesses nonzero Gaussian curvature with lower bounds depending only on $K$,
we have by Stein-Tomas theorem \cite{Stein86,Tomas}
\begin{equation}\label{equ:ome0est}
  \|\mathcal E_{\Omega_0}g \|_{L^p(B_R)}\lesssim K^{O(1)}\|g\|_{L^2(\Omega_0)},
\end{equation}
for $p>\frac{2k+2}{k}$.

For $\Omega_3$, by the change of variables $\xi=K^{-\frac14}\tilde{\xi}, \eta=K^{-\frac14}\tilde{\eta}$, we have
\begin{align*}
\mathcal E_{\Omega_3}g(x)=& \int_{\Omega_3} g\big(\xi,\eta \big)e\Big(x'\cdot \xi+x''\cdot \eta+x_{2k+1}(\vert \xi \vert^4-\vert \eta \vert^4)\Big)d\xi d\eta\\
=&\int_{B^k_1\times B^k_1} \tilde{g}(\tilde{\xi},\tilde{\eta})e\Big(K^{-\frac{1}{4}}x'\cdot\tilde{\xi}+K^{-\frac{1}{4}}x''\cdot\tilde{\eta}+K^{-1}x_{2k+1}(\vert \tilde{\xi} \vert^4-\vert \tilde{\eta}\vert^4)\Big)d\tilde{\xi}d\tilde{\eta}\\
=& \big(\mathcal E_{B^k_1\times B^k_1}\tilde{g}\big)(\tilde{x}),
\end{align*}
where
\begin{align*}
\tilde{g}(\tilde{\xi},\tilde{\eta}):=&K^{-\frac{k}{2}}g(K^{-\frac{1}{4}}\tilde{\xi},K^{-\frac{1}{4}}\tilde{\eta}),
\end{align*}
and
\begin{equation*}
\tilde{x}=(K^{-\frac{1}{4}}x',K^{-\frac{1}{4}}x'',K^{-1}x_{2k+1}).
\end{equation*}
Therefore, we derive
\begin{align}\nonumber
\|\mathcal E_{\Omega_3}{g}\|_{L^p(B_R)}\leq & K^{\frac{k+2}{2p}}\|\mathcal E_{B^k_1\times B^k_1}\tilde{g}\|_{L^p(B_{\frac{R}{K^{1/4}}})}\\\nonumber
\leq&K^{\frac{k+2}{2p}}Q_p\Big(\tfrac{R}{K^{\frac14}}\Big)\|\tilde{g}\|_{L^2(\Omega)}\\\label{equ:ome3est}
\leq& C(\varepsilon)K^{\frac{k+2}{2p}-\frac{k}4}Q_p\Big(\tfrac{R}{K^{\frac14}}\Big)\|g\|_{L^2(\Omega_3)}.
\end{align}

It suffices to consider the estimate for $\Omega_1$-part and $\Omega_2$-part. By symmetry, we only need to estimate the contribution from $\Omega_1$-part. We decompose $\Omega_1$ into
\[\Omega_1=\bigcup \Omega_{\lambda},\;\Omega_{\lambda}=A^k_{\lambda}\times B^k_{K^{-1/4}},\]
where $\lambda$ is a dyadic number satisfying $K^{-\frac14}\leq \lambda \leq \frac{1}{2}$ and
\[A^k_{\lambda}:=\{\xi \in B^k_1:\;\lambda \leq \vert \xi \vert \leq 2\lambda\},\]
as in Figure 2 below.

\begin{center}
\includegraphics[width=120mm]{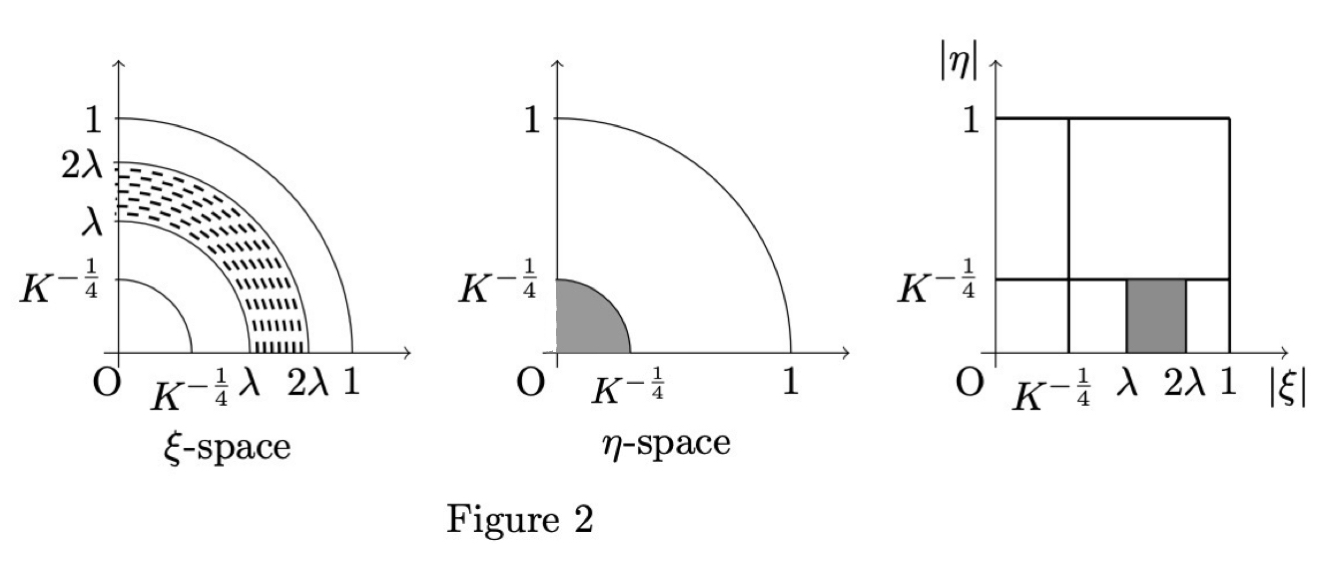}

\end{center}

It suffices to estimate the contribution from each $\Omega_{\lambda}$. We will use an induction on scale argument. For this purpose, we establish a decoupling inequality for
\[\Sigma_{1,\lambda}:=\{(\xi,\vert \xi \vert^4):\;\xi\in\mathbb{R}^k,\;\xi\in A^k_{\lambda}\}\]
in $\mathbb{R}^{k+1}$.

Let $E$ denote the Fourier extension operator associated with $\Sigma_{1,\lambda}$ in $\mathbb{R}^{k+1}$. We cover the region $A^k_{\lambda}$ by $\lambda^{-1}K^{-1/2}$-balls $\tau$. Note that when $\lambda=\frac{1}{2}$, $\Sigma_{1,\frac{1}{2}}$ possesses positive definite second fundamental form in $\mathbb{R}^{k+1}$. Each $\Sigma_{1,\lambda}$ can be transformed into $\Sigma_{1,\frac{1}{2}}$. After rescaling, each $\lambda^{-1}K^{-1/2}$-ball becomes $\lambda^{-2}K^{-1/2}$-ball. Thus, by Bourgain-Demeter's decoupling inequality in \cite{BD15} for perturbed paraboloid, we obtain
\begin{lemma}\label{fracDec}
Let $p_c=\frac{2(k+2)}{k}$ and $0<\delta \ll \varepsilon$. For $p=p_c+\delta$, there holds
\begin{equation}\label{equ:fracDec}
\Vert E_{A^k_{\lambda}}g \Vert_{L^p(B^{k+1}_K)}\lesssim_{\varepsilon} K^{\varepsilon}\Big(\sum_{\tau}\Vert E_{\tau}g \Vert^2_{L^p(\omega_{B^{k+1}_K})}\Big)^{1/2},
\end{equation}
where $\omega_{B^{k+1}_K}(y):=\big(1+\vert \frac{y-c(B^{k+1}_K)}{K} \vert \big)^{-200k}$ denotes the standard weight function adapted to the ball $B^{k+1}_K$. Here $B^{k+1}_K$ represents an arbitrary ball of radius $K$ in $\mathbb{R}^{k+1}$ and $c(B^{k+1}_K)$ denotes its center.
\end{lemma}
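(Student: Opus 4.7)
The plan is to reduce the decoupling inequality for $\Sigma_{1,\lambda}$ to the standard Bourgain--Demeter $\ell^2$-decoupling for an elliptic hypersurface, via an anisotropic parabolic rescaling that maps the annulus $A^k_\lambda$ onto the unit annulus $A^k_1$. On $A^k_1$ the Hessian of $|\tilde\xi|^4$ has eigenvalues uniformly comparable to $1$, so the rescaled surface $\{(\tilde\xi,|\tilde\xi|^4):\tilde\xi\in A^k_1\}$ is smooth with positive definite second fundamental form, and the perturbed-paraboloid form of the decoupling theorem of \cite{BD15} applies directly to it.

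Concretely, I would set $\xi=\lambda\tilde\xi$ and $\tilde g(\tilde\xi):=g(\lambda\tilde\xi)$ and exploit the factorization
\[y'\cdot\xi+y_{k+1}|\xi|^4=(\lambda y')\cdot\tilde\xi+(\lambda^4 y_{k+1})|\tilde\xi|^4.\]
With $\tilde y=(\lambda y',\lambda^4 y_{k+1})$ this yields $E_{A^k_\lambda}g(y)=\lambda^k\,\widetilde E\tilde g(\tilde y)$, where $\widetilde E$ is the extension operator for the rescaled surface. Under this change of variables, $B^{k+1}_K$ is sent to an anisotropic box $\mathfrak B$ of dimensions $\lambda K\times\cdots\times\lambda K\times\lambda^4 K$ (inradius $\lambda^4 K$), while the $\lambda^{-1}K^{-1/2}$-caps $\tau\subset A^k_\lambda$ become $\lambda^{-2}K^{-1/2}$-caps $\tilde\tau\subset A^k_1$. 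Applying BD at the scale $R'=\lambda^4 K$, so that $(R')^{-1/2}=\lambda^{-2}K^{-1/2}$ matches the cap size, and translating the weight $\omega_{\mathfrak B}$ back to $\omega_{B^{k+1}_K}$, yields \eqref{equ:fracDec} with loss $(\lambda^4 K)^\varepsilon\leq K^\varepsilon$ since $\lambda\leq 1/2$.

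The main obstacle is that BD is stated on balls, while $\mathfrak B$ is an anisotropic box of aspect ratio $\lambda^{-3}$. A naive covering of $\mathfrak B$ by balls of radius $\lambda^4 K$ does not close, because reassembling the pieces would require $\ell^2_{\tilde\tau}(L^p_x)\leq L^p_x(\ell^2_{\tilde\tau})$, which is the wrong direction of Minkowski for $p>2$. Instead I would invoke the cylindrical form of BD, which follows from \cite{BD15} by a further affine rescaling in the elongated directions together with a trivial lower-dimensional decoupling along them; the weighted right-hand side of \eqref{equ:fracDec} accommodates the box geometry, and the single $K^\varepsilon$ factor is generated at precisely this step.
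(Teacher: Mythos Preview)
Your overall strategy---rescale $A^k_\lambda$ to the unit annulus via $\xi=\lambda\tilde\xi$, observe that the $\lambda^{-1}K^{-1/2}$-caps become $\lambda^{-2}K^{-1/2}$-caps, and invoke Bourgain--Demeter on the elliptic surface $\{(\tilde\xi,|\tilde\xi|^4):\tilde\xi\in A^k_1\}$---is exactly the paper's argument.

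The only issue is your analysis of the ``obstacle''. You claim that covering the anisotropic box $\mathfrak B$ by balls $B_j$ of its inradius $\lambda^4K$ and applying BD on each fails because reassembly would require $\ell^2_{\tilde\tau}(L^p_x)\leq L^p_x(\ell^2_{\tilde\tau})$. This has the direction reversed. After applying BD on each $B_j$ and taking the $\ell^p_j$-sum, the right-hand side is
\[
\bigl\|\,\|\widetilde E_{\tilde\tau}\tilde g\|_{L^p(\omega_{B_j})}\bigr\|_{\ell^p_j\ell^2_{\tilde\tau}},
\]
whereas the target $(\sum_{\tilde\tau}\|\widetilde E_{\tilde\tau}\tilde g\|^2_{L^p(\sum_j\omega_{B_j})})^{1/2}$ is $\|\cdot\|_{\ell^2_{\tilde\tau}\ell^p_j}$. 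For $p\geq 2$ Minkowski gives precisely $\ell^p_j\ell^2_{\tilde\tau}\leq \ell^2_{\tilde\tau}\ell^p_j$, so the naive covering \emph{does} close; this is the standard parallel-decoupling step. Choosing the decay exponent on $\omega_{B_j}$ sufficiently large (relative to the $200k$ in $\omega_{B^{k+1}_K}$) ensures $\sum_j\omega_{B_j}$ is dominated by the anisotropic weight adapted to $\mathfrak B$, which pulls back to $\omega_{B^{k+1}_K}$. Your detour through a ``cylindrical form of BD'' is therefore unnecessary, though not incorrect.
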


With Lemma \ref{fracDec} in hand, we prove the decoupling inequality for the region $\Omega_{\lambda}$ by freezing the $x''$ variable as follows. Fix a bump function $\varphi \in C^{\infty}_c(\mathbb{R}^{2k+1})$ with ${\rm supp}\; \varphi \subset B^{2k+1}(0,1)$ and $\vert \check{\varphi}(x)\vert \geq 1$ for all $x \in B^{2k+1}(0,1)$. Defining $F:=\mathcal{F}^{-1}({\varphi}_{K^{-1}}\cdot \mathcal{E}_{\Omega_{\lambda}}g)$, where $\varphi_{K^{-1}}(\zeta):= K^{2k+1}\varphi(K\zeta),\;\zeta:=(\xi,\eta,s)\in \mathbb{R}^k\times \mathbb{R}^k\times \mathbb{R}=\mathbb{R}^{2k+1}$.
Then we denote $F(\cdot,x'',\cdot)$ by $G$. By the argument in \cite{Guth18}, it is easy to see that ${\rm supp}\;\hat{G}$ is contained in the projection of ${\rm supp}\;\hat{F}$ on the hyperplane $\eta=0$, that is, in the $K^{-1}$-neighborhood of
\[\{(\xi,\vert \xi \vert^4):\xi \in A^k_{\lambda}\}\]
in $\mathbb{R}^{k+1}$. Applying an equivalent form of Lemma \ref{fracDec} to $G$, we get
\[ \|G\|_{L^p(\mathbb{R}^{k+1})}\leq C_{\varepsilon}K^{\varepsilon} \Big(\sum_{\bar{\tau}}\Vert G_{\bar{\tau}}\Vert^2_{L^p(\mathbb{R}^{k+1})}\Big)^{1/2},\]
namely,
\[\|F(\cdot,x_2,\cdot)\|_{L^p(\mathbb{R}^{k+1})}\leq C_{\varepsilon}K^{\varepsilon} \Big(\sum_{\bar{\tau}}\Vert F_{\bar{\tau}}(\cdot,x_2,\cdot)\Vert^2_{L^p(\mathbb{R}^{k+1})}\Big)^{1/2},\]
where
\[G_{\bar{\tau}}:=\mathcal{F}^{-1}(\hat{G}\chi_{\bar{\tau}})\]
and $\bar{\tau}$ denotes the $K^{-1}$-neighborhood of $\tau$ in $\mathbb{R}^{k+1}$. Integrating on both sides of the above inequality with respect to $x_2$-variable in $\mathbb{R}^k$, we derive
\[\|F\|_{L^p(\mathbb{R}^{2k+1})}\leq C_{\varepsilon}K^{\varepsilon} \Big(\sum_{\bar{\tau}}\Vert F_{\bar{\tau}}\Vert^2_{L^p(\mathbb{R}^{2k+1})}\Big)^{1/2}.\]
Thus, we have
\begin{align*}
\|\mathcal{E}_{\Omega_{\lambda}}g \|_{L^p(B_K)}\lesssim \|F\|_{L^p(\mathbb{R}^{2k+1})}
\leq& C_{\varepsilon}K^{\varepsilon}\Big(\sum_{\bar{\tau}}\Vert F_{\bar{\tau}}\Vert^2_{L^p(\mathbb{R}^{2k+1})}\Big)^{1/2}\\
\leq& C_{\varepsilon}K^{\varepsilon}\Big(\sum_{\tau}
\Vert\mathcal{E}_{\tau\times B^k_{K^{-1/4}}}g\Vert^2_{L^p(w_{B_K})}\Big)^{1/2}.
\end{align*}

Summing over all the balls $B_K\subset B_R$ we obtain
\begin{equation}\label{equ:fracDec1R}
\Vert \mathcal{E}_{\Omega_{\lambda}}g \Vert_{L^p(B_R)}\lesssim_{\varepsilon}K^{\varepsilon}\Big(\sum_{\tau}\Vert \mathcal{E}_{\tau \times B^k_{K^{-1/4}}}g \Vert^2_{L^p(\omega_{B_R})}\Big)^{1/2},
\end{equation}
where $\omega_{B_R}$ denotes the weight function adapted to the ball $B_R$. Here $B_R$ represents the ball centered at the origin of radius $R$ in $\mathbb{R}^{2k+1}$.

We apply rescaling to the term $\Vert \mathcal{E}_{\tau \times B^k_{K^{-1/4}}}g \Vert$. Taking the change of variables
\[\xi=\xi^{\tau}+\lambda^{-1}K^{-1/2}\tilde{\xi},\;\eta=K^{-1/4}\tilde{\eta}\]
we rewrite
\begin{equation}\label{eq:rewrite}
\vert \mathcal{E}_{\tau\times B^k_{K^{-1/4}}}g(x)\vert=\Big\vert\int_{\Omega}\tilde{g}(\tilde{\xi},\tilde{\eta})e[\tilde{x}'\cdot \tilde{\xi}+ \tilde{x}''\cdot \tilde{\eta}+\tilde{x}_{2k+1}(\psi_1(\tilde{\xi})-\vert\tilde{\eta}\vert^4)]d\tilde{\xi}d\tilde{\eta}\Big\vert,
\end{equation}
where $\xi^{\tau}$ denotes the center of $\tau$,
\[\tilde{g}(\tilde{\xi},\tilde{\eta}):=\lambda^{-k}K^{-\frac{3k}{4}}g(\xi^{\tau}+\lambda^{-1}K^{-1/2}\tilde{\xi},K^{-1/4}\tilde{\eta}),\]
\[\tilde{x}':=\lambda^{-1}K^{-1/2}x'+(K^{-1}\vert \xi^{\tau} \vert^4+4\lambda^{-1}K^{-3/2}\vert \xi^{\tau} \vert^2
x_{2k+1})\xi^{\tau},\]
\[\tilde{x}'':=K^{-1/4}x'',\;\tilde{x}_{2k+1}:=K^{-1}x_{2k+1}\]
and
\[\psi_1(\tilde{\xi}):=\lambda^{-2}\vert \xi^{\tau} \vert^2\vert \tilde{\xi} \vert^2+4\lambda^{-2}\vert \langle \xi^{\tau}, \tilde{\xi}\rangle\vert^2+4\lambda^{-3}K^{-1/2}\langle \xi^{\tau}, \tilde{\xi}\vert \tilde{\xi} \vert^2\rangle+\lambda^{-4}K^{-1}\vert \tilde{\xi} \vert^4.\]

We claim that the hypersurface
\begin{equation*}
\tilde{\Sigma}_1:=\{(\tilde{\xi},\psi_1(\tilde{\xi})):\;\tilde{\xi}\in B^k_1\}
\end{equation*}
has positive definite second fundamental form in $\mathbb{R}^{k+1}$. It can be verified as follows. For simplicity, we show the calculation only for $k=2$. By a direct computation, the Hessian matrix of the function $\psi_1(\tilde{\xi})$ is
\begin{equation}
\left(
  \begin{array}{cc}
    \partial^2_{11}\psi_1 & \partial^2_{12}\psi_1 \\
    \partial^2_{21}\psi_1 & \partial^2_{22}\psi_1 \\
  \end{array}
\right),
\end{equation}
where
\[\partial^2_{11}\psi_1(\tilde{\xi})=\lambda^{-2}\big(2\vert\xi^{\tau}\vert^2+8(\xi^{\tau}_1)^2\big)
+\lambda^{-3}K^{-1/2}(24\xi^{\tau}_1\tilde{\xi}_1+4\xi^{\tau}_2\tilde{\xi}_2)
+\lambda^{-4}K^{-1}\big(12(\tilde{\xi}_1)^2+4(\tilde{\xi}_2)^2\big),\]
\[\partial^2_{12}\psi_1(\tilde{\xi})=\partial^2_{21}\psi_1(\tilde{\xi})=8\lambda^{-2}\xi^{\tau}_1\xi^{\tau}_2
+8\lambda^{-3}K^{-1/2}(\xi^{\tau}_1\tilde{\xi}_2+\xi^{\tau}_2\tilde{\xi}_1)
+8\lambda^{-4}K^{-1}\tilde{\xi}_1\tilde{\xi}_2,\]
and
\[\partial^2_{22}\psi_1(\tilde{\xi})=\lambda^{-2}\big(2\vert\xi^{\tau}\vert^2+8(\xi^{\tau}_2)^2\big)
+\lambda^{-3}K^{-1/2}(24\xi^{\tau}_2\tilde{\xi}_2+4\xi^{\tau}_1\tilde{\xi}_1)
+\lambda^{-4}K^{-1}\big(12(\tilde{\xi}_2)^2+4(\tilde{\xi}_1)^2\big).\]

Without loss of generality, we can assume $\xi^{\tau}_2=0$. Then one can deduce from the fact $K^{-1/4}\leq \lambda \leq \frac{1}{2}$ that the two eigenvalues of Hessian matrix of $\psi_1$ are $\sim 1$ and
\[\vert\partial^{\alpha}\psi_1\vert\lesssim 1,\;3\leq \vert \alpha \vert\leq 4,\;\;\vert\partial^{\beta}\psi_1\vert=0,\;\vert \beta \vert\geq 5,\]
on $B^k_1$. We say that such a phase function is admissible. This terminology will be used several times later.

To estimate the right-hand side of \eqref{eq:rewrite}, we denote by
\[\tilde{\mathcal{E}}_\Omega f(\tilde{x})
:=\int_{\Omega}f(\tilde{\xi},\tilde{\eta})e[\tilde{x}'\tilde{\xi}+\tilde{x}''\tilde{\eta}+\tilde{x}_{2k+1}(\psi_1(\xi)-\vert \tilde{\eta}\vert^4)]d\tilde{\xi}d\tilde{\eta}.\]

\begin{proposition}\label{newprop}
Let $0<\delta \ll \varepsilon$ and $p=p_c+\delta$. There holds
\begin{equation}\label{newequ}
\Vert \tilde{\mathcal{E}}_{\Omega}f \Vert_{L^p(B_R)}\lesssim_{\varepsilon}R^{\varepsilon}\Vert f \Vert_{L^2(\Omega)}.
\end{equation}
\end{proposition}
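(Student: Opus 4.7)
The strategy is to mimic the scheme of Section 2 on the $\tilde\eta$-side (since $\psi_1$ is already admissible on the $\tilde\xi$-side) and argue by induction on $R$. Let $\tilde Q_p(R)$ denote the smallest constant for which
\[
\|\tilde{\mathcal{E}}_\Omega f\|_{L^p(B_R)}\leq\tilde Q_p(R)\|f\|_{L^2(\Omega)}
\]
holds uniformly over all admissible phase functions $\psi_1$, and set $K=R^{\varepsilon^{100k}}$. The goal is $\tilde Q_p(R)\lesssim_\varepsilon R^\varepsilon$.

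Decompose $\Omega=\tilde\Omega_a\cup\tilde\Omega_b$ with $\tilde\Omega_a:=B^k_1\times(B^k_1\setminus B^k_{K^{-1/4}})$ and $\tilde\Omega_b:=B^k_1\times B^k_{K^{-1/4}}$. On $\tilde\Omega_a$, every eigenvalue of the $(2k)\times(2k)$ Hessian of $\psi_1(\tilde\xi)-|\tilde\eta|^4$ is bounded below by a power of $K$, so classical Stein--Tomas gives $\|\tilde{\mathcal{E}}_{\tilde\Omega_a}f\|_{L^p(B_R)}\lesssim K^{O(1)}\|f\|_{L^2}$ for $p\geq\tfrac{2k+2}{k}<p_c$. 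For $\tilde\Omega_b$, further decompose the $\tilde\eta$-ball dyadically into annuli $A^k_\mu:=\{\mu\leq|\tilde\eta|\leq 2\mu\}$ with $R^{-1/4}\leq\mu\leq K^{-1/4}$ together with a central ball $B^k_{R^{-1/4}}$, whose contribution is negligible by the trivial volume estimate. For each annulus $A^k_\mu$, apply the $\tilde\eta$-analogue of the scheme between Lemma \ref{fracDec} and \eqref{equ:fracDec1R}: rescale $\{(\tilde\eta,-|\tilde\eta|^4):\tilde\eta\in A^k_\mu\}$ in $\mathbb{R}^{k+1}$ to a perturbed paraboloid, invoke Bourgain--Demeter decoupling at scale $K$, and lift the resulting $\mathbb{R}^{k+1}$-decoupling to $\tilde{\mathcal{E}}_{B^k_1\times A^k_\mu}$ via a freezing argument in $\tilde x'$.

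The final step is to rescale each $\mu^{-1}K^{-1/2}$-cap $\sigma\subset A^k_\mu$ via $\tilde\eta=\eta^\sigma+\mu^{-1}K^{-1/2}\eta'$ along with matching rescalings of $\tilde x''$ and $\tilde x_{2k+1}$. By the same polynomial-expansion argument that established admissibility of $\psi_1$ in Section 2, the resulting $\eta'$-side phase $\psi_2(\eta')$ is also admissible; combined with the (appropriately rescaled) $\psi_1(\tilde\xi)$, the new hypersurface in $\mathbb{R}^{2k+1}$ has non-vanishing Gaussian curvature throughout, so Stein--Tomas at the rescaled radius (or an application of the induction hypothesis) closes the estimate on each cap. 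Summing over caps via the decoupling, over dyadic $\mu$, and over balls $B_K\subset B_R$, while tracking all rescaling factors, yields a recursion of the form $\tilde Q_p(R)\lesssim_\varepsilon K^\varepsilon K^{-\delta_0}\tilde Q_p(R/K^{1/4})+K^{O(1)}$ for some $\delta_0>0$ when $p>p_c$, and then $K=R^{\varepsilon^{100k}}$ closes the induction. The main obstacle will be the rescaling step: specifically, choosing the output-variable rescalings so that both $\psi_1$ and the newly produced $\psi_2$ remain admissible simultaneously, despite the asymmetry that $\psi_2$ arises from a Taylor expansion around $\eta^\sigma$ whose natural scale differs from that of $\psi_1$.
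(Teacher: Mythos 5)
Your decomposition of $\Omega$ into $\tilde\Omega_a\cup\tilde\Omega_b$, the uniform-over-admissible-phases induction quantity, and the Stein--Tomas treatment of $\tilde\Omega_a$ all match the paper. But your handling of $\tilde\Omega_b=B^k_1\times B^k_{K^{-1/4}}$ goes in the wrong direction and has two genuine gaps. First, the central ball $B^k_{R^{-1/4}}$ in $\tilde\eta$ is not negligible by the trivial volume estimate: that estimate gives
\[
\Vert \tilde{\mathcal E}_{B^k_1\times B^k_{R^{-1/4}}}f\Vert_{L^p(B_R)}\lesssim R^{\frac{2k+1}{p}}\Vert f\Vert_{L^1}\lesssim R^{\frac{2k+1}{p}-\frac{k}{8}}\Vert f\Vert_{L^2},
\]
and for $p$ near $p_c=\frac{2k+4}{k}$ the exponent $\frac{2k+1}{p}-\frac{k}{8}$ is positive and bounded away from $0$ (it equals $1$ for $k=2$, $p=4$). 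Second, and more seriously, the decoupling-plus-rescaling scheme on the $\tilde\eta$-annuli $A^k_\mu$ with $\mu\leq K^{-1/4}$ cannot be carried out: the Hessian of $-\vert\tilde\eta\vert^4$ on $\vert\tilde\eta\vert\sim\mu$ has size $\mu^2\leq K^{-1/2}$, so over all of $B^k_{K^{-1/4}}$ the graph of $-\vert\tilde\eta\vert^4$ stays within $O(K^{-1})$ of a hyperplane and Bourgain--Demeter at spatial scale $K$ yields no cap decomposition at all (your caps of radius $\mu^{-1}K^{-1/2}\geq K^{-1/4}\geq\mu$ are at least as large as the annulus itself). The obstacle you flag at the end is fatal, not technical: rendering $\psi_2$ admissible requires rescaling $x_{2k+1}$ by the curvature scale $\mu^2\cdot(\mu^{-1}K^{-1/2})^2=K^{-1}$ of the $\tilde\eta$-piece only after also rescaling $\tilde\eta$ by the cap radius, and the same $x_{2k+1}$ multiplies $\psi_1$, whose curvature scale is $\sim 1$; there is one shared output variable and two incompatible renormalizations.

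The paper decouples on the \emph{other} factor. Since $\psi_1$ is admissible, the $\tilde\xi$-surface is a perturbed paraboloid with curvature $\sim 1$, so the analogue of Lemma \ref{fracDec} plus the freezing argument gives \eqref{equ:newfracDec1R}, decoupling $\tilde\Omega_1$ into pieces $\tilde\tau\times B^k_{K^{-1/4}}$ with $\tilde\tau$ a $K^{-1/2}$-ball in $\tilde\xi$. The anisotropic rescaling $\tilde\xi=K^{-1/2}\bar\xi$, $\tilde\eta=K^{-1/4}\bar\eta$, $\bar x_{2k+1}=K^{-1}\tilde x_{2k+1}$ then works for both factors simultaneously precisely because $(K^{-1/2})^2=(K^{-1/4})^4=K^{-1}$: it performs the parabolic rescaling of the admissible cap, producing another admissible phase $\bar\psi_1$, and by homogeneity reproduces $-\vert\bar\eta\vert^4$ exactly on $B^k_1$. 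The operator therefore stays in the same class at radius $R/K^{1/4}$, with the Jacobian factor $K^{\frac{3k+4}{4p}-\frac{3k}{8}}\lesssim 1$ for $p>p_c$, giving the recursion $A_p(R)\leq K^{O(1)}+C_\varepsilon A_p(R/K^{1/4})$ which closes upon iterating $\approx\log_K R$ times. No dyadic decomposition in $\tilde\eta$, no second admissible phase $\psi_2$, and no curvature from the $\tilde\eta$-factor are ever used on $\tilde\Omega_1$; the degenerate $\tilde\eta$-behaviour is absorbed entirely by the induction on scales. Your proposal would need to be restructured along these lines.
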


Assume that Proposition \ref{newprop} holds for a while, we have by rescaling that
\begin{equation}\label{equ:tauestimate}
\Vert \mathcal{E}_{\tau\times B^k_{K^{-1/4}}}g \Vert_{L^p(B_R)}\lesssim_{\varepsilon}R^{\varepsilon}\Vert g \Vert_{L^2(\tau\times B^k_{K^{-1/4}})}.
\end{equation}

Plugging \eqref{equ:tauestimate} into \eqref{equ:fracDec1R} we get
\begin{equation}\label{equ:frac1restr}
\Vert \mathcal{E}_{\Omega_{\lambda}}g \Vert_{L^p(B_R)}\lesssim_{\varepsilon} R^{\varepsilon}\Vert g \Vert_{L^2(\Omega_{\lambda})},\;\;p>p_c+\delta.
\end{equation}

Combining \eqref{equ:ome0est}, \eqref{equ:ome3est} and \eqref{equ:frac1restr}, we get
\[Q_p(R)\leq K^{O(1)}+2C_{\varepsilon}R^{\varepsilon}+Q_p(\tfrac{R}{K^{1/4}}).\]
Iterating the above inequality $m\approx [\log_{K}R]$ times we derive that
\[Q_p(R)\lesssim_{\varepsilon}R^{\varepsilon}.\]
This completes the proof of Theorem \ref{thm:main}.

Now we turn to prove Proposition \ref{newprop}. Let $A_p(R)$ denote the least number such that
\begin{equation}\label{equ:defapr}
  \|\tilde{\mathcal E}_{\Omega}f\|_{L^p(B_R)}\leq A_p(R)\|f\|_{L^2(\Omega)}
\end{equation}
holds for all $f\in L^2(\Omega)$.

We decompose $\Omega$ into $\tilde{\Omega}_0\bigcup \tilde{\Omega}_1$, as in Figure 3 below.
\begin{center}
 \begin{tikzpicture}[scale=0.7]
\draw[->] (-0.2,0) -- (4,0) node[anchor=north]{};
\draw[->] (0,-0.2) -- (0,4)  node[anchor=east]{};

\draw (-0.15,0) node[anchor=north] {O}
(3.2,0) node[anchor=north] {$1$};

\draw (0,3.2) node[anchor=east] {1};
%

\begin{scope}  
    \clip (0,5)--(5,0)--(-0.15,0);
   \draw (0,0) circle (3.2);
\end{scope}

\path (1.5,-1.3) node(caption){$\xi$-space };  


 \draw[->] (5.8,0) -- (10,0) node[anchor=north] {};
\draw[->] (6,-0.2) -- (6,4)  node[anchor=east] {};
 \draw (5.65,0) node[anchor=north] {O}
 (9.2,0) node[anchor=north] {1}
 (7,0) node[anchor=north] {\small $K^{-\frac14}$};

 \draw (6,3.2) node[anchor=east] {1}
      (6,1) node[anchor=east] {${K^{-\frac14}}$};

\begin{scope}  
    \clip (5.85,0)--(10,0)--(7,9);
   \draw (6,0) circle (1);
\end{scope}

\begin{scope}  
    \clip (5.65,0)--(10,0)--(6.5,9);
   \draw (6,0) circle (3.2);
\end{scope}

\path (7.5,-1.3) node(caption){$\eta$-space };  


 \draw[->] (11.8,0) -- (16,0) node[anchor=north] {$|\xi|$};
\draw[->] (12,-0.2) -- (12,4)  node[anchor=east] {$|\eta|$};

  \draw (11.65,0) node[anchor=north] {O}
 (15.2,0) node[anchor=north] {1}
(13.5,2.6) node[anchor=north] {$\tilde\Omega_0$}
(13.5,0.9) node[anchor=north] {$\tilde\Omega_1$};

 \draw (12,3.2) node[anchor=east] {1}
 (12,1) node[anchor=east] {${K^{-\frac14}}$};

  \draw[thick] (12,3.2) -- (15.2,3.2)
             (15.2,0) -- (15.2,3.2)
             (12,1) -- (15.2,1);

\path (5.5,-2.3) node(caption){Figure 3 };  
\end{tikzpicture}
 \end{center}
where \[\tilde{\Omega}_0:=B^k_1\times A^k,\quad \tilde{\Omega}_1:=B^k_1\times B^k_{K^{-1/4}}.\]

Since the hypersurface corresponding to the region $\tilde{\Omega}_0$ possesses nonzero Gaussian curvature with lower bounds depending only on $K$, the Stein-Tomas theorem \cite{Tomas, Stein86} implies
\begin{equation}\label{equ:newome0est}
  \|\tilde{\mathcal E}_{\tilde{\Omega}_0}f\|_{L^p(B_R)}\leq K^{O(1)}\|f\|_{L^2(\tilde{\Omega}_0)},
\end{equation}
for $p>\frac{2k+2}{k}$.

It remains to estimate the contribution from the $\tilde{\Omega}_1$-part. As in Section 2, we have the following decoupling inequality for the hypersurface corresponding to the region $\tilde{\Omega}_1$.

\begin{equation}\label{equ:newfracDec1R}
\Vert \tilde{\mathcal{E}}_{\tilde{\Omega}_1}f \Vert_{L^p(B_R)}\lesssim_{\varepsilon} K^{\varepsilon}\Big(\sum_{\tilde{\tau}}\Vert \tilde{\mathcal{E}}_{\tilde{\tau} \times B^k_{K^{-1/4}}}f \Vert^2_{L^p(\omega_{B_R})}\Big)^{1/2},
\end{equation}
where $\tilde{\tau}$ denotes $K^{-1/2}$-ball in $\mathbb{R}^{k}$.

Without loss of generality, we may assume that $\tilde{\tau}$ is centered at the origin. Taking the change of variable
\[\tilde{\xi}=K^{-1/2}\bar{\xi},\;\tilde{\eta}=K^{-1/4}\bar{\eta}\]
we rewrite
\[\vert\tilde{\mathcal{E}}_{\tilde{\tau}\times B^k_{K^{-1/4}}}f \vert=\Big\vert \int_{\Omega}\bar{f}(\bar{\xi},\bar{\eta})
e[\bar{x}'\bar{\xi}+\bar{x}''\bar{\eta}+\bar{x}_{2k+1}(\bar{\psi}_1(\bar{\xi})-\bar{\eta}^4)]d\bar{\xi}d\bar{\eta}\Big\vert
=:\vert \bar{\mathcal{E}}_{\Omega}\bar{f}(\bar{x})\vert,\]
where
\[\bar{f}(\bar{\xi},\bar{\eta}):=K^{-\frac{3k}{4}}f(K^{-1/2}\bar{\xi},K^{-1/4}\bar{\eta}),\]
\[\bar{x}':=K^{-1/2}\tilde{x}',\;\bar{x}'':=K^{-1/4}\tilde{x}'',\;\bar{x}_{2k+1}:=K^{-1}\tilde{x}_{2k+1},\]
\[\bar{\psi}_1(\bar{\xi}):=K\psi_1(K^{-\frac{1}{2}}\tilde{\xi})\]
and
$\bar{\mathcal{E}}$ denotes the extension operator associated with the new phase function
\[\bar{\psi}_1(\bar{\xi})-\vert\bar{\eta}\vert^4.\]
We observe that $\bar{\psi}_1$ is
also an admissible phase function in $\mathbb{R}^{k+1}$. Noting that $\vert \bar{x} \vert \leq \frac{R}{K^{1/4}}$ and applying induction on scales to the term $\Vert \bar{\mathcal{E}}_{\Omega}\bar{f} \Vert_{L^p(B_{\frac{R}{K^{1/4}}})}$,
we get
\[\Vert \tilde{\mathcal{E}}_{\tilde{\Omega}_1}f \Vert_{L^p(B_R)}\lesssim_\varepsilon K^{\frac{3k+4}{4p}-\frac{3k}{8}+\varepsilon}A_p(\tfrac{R}{K^{1/4}})\Vert f \Vert_{L^2(\tilde{\Omega}_1)}.\]
Since $p>p_c=\frac{2k+4}{k}$, the above inequality can be rewritten as follows:
\[\Vert \tilde{\mathcal{E}}_{\tilde{\Omega}_1}f \Vert_{L^p(B_R)}\lesssim_\varepsilon A_p(\tfrac{R}{K^{1/4}})\Vert f \Vert_{L^2(\tilde{\Omega}_1)}.\]
This together with \eqref{equ:newome0est} yields
\[A_p(R)\leq K^{O(1)}+C_\varepsilon A_p(\tfrac{R}{K^{1/4}}).\]
Iterating the above inequality $m\approx [\log_{K}R]$ times, we derive
\[A_p(R)\lesssim_{\varepsilon}R^{\varepsilon}\]
as required.

\section{Proof of Theorem \ref{mainthm2}}

In this section, we prove Theorem \ref{mainthm2}. First, we establish an auxiliary proposition as follows:

\begin{proposition}\label{mainprop0}
Let $n\geq 5$ be an odd number and $S$ be a given compact smooth hypersurface with boundary in $\mathbb{R}^n$ with nonzero Gaussian curvature. The inequality
\begin{equation}\label{equ:goalformprop0}
\Vert E_Sf \Vert_{L^p(\mathbb{R}^n)}\leq C_{n,p}\Vert f \Vert_{L^{\infty}(B^{n-1}(0,1))}
\end{equation}
holds for $p>\frac{2(n+1)}{n-1}-\frac{2}{n(n-1)}$, where $E_S$ denotes the extension operator associated with the hypersurface $S$ in $\mathbb{R}^{n}$.
\end{proposition}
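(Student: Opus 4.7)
Write $p_0 = \tfrac{2(n+1)}{n-1}$; the proposition improves the Stein--Tomas $L^\infty\to L^p$ exponent by $\tfrac{2}{n(n-1)}$. By Tao's $\epsilon$-removal argument, it suffices to establish the localized bound
\[
\|E_S f\|_{L^p(B_R)} \leq C_\epsilon R^\epsilon \|f\|_{L^\infty(B^{n-1}(0,1))}
\]
for every ball $B_R \subset \mathbb{R}^n$ of radius $R$ and every $p > p_0 - \tfrac{2}{n(n-1)}$, and then to run an induction on the scale $R$.

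The plan is a broad--narrow decomposition of Bourgain--Guth type. Fix $K = R^\delta$ with $\delta \ll \epsilon$, partition $S$ into caps $\theta$ of diameter $K^{-1}$, and at each $x \in B_R$ distinguish whether $E_S f(x)$ is dominated by $n$ caps $\theta_1, \ldots, \theta_n$ whose normals are quantitatively transversal (broad case) or by caps whose directions all lie close to some $(n-2)$-dimensional subvariety of $S$ (narrow case). On the broad set, the Bennett--Carbery--Tao multilinear restriction estimate yields
\[
\Big\| \prod_{j=1}^n |E_{\theta_j} f|^{1/n} \Big\|_{L^{2n/(n-1)}(B_R)} \lesssim_\epsilon R^\epsilon \prod_{j=1}^n \|f \chi_{\theta_j}\|_2^{1/n};
\]
combining this with the $L^\infty$ bound $\|f \chi_{\theta_j}\|_2 \leq |\theta_j|^{1/2} \|f\|_\infty$ and the standard multilinear-to-linear reduction produces a linear $L^p$ estimate for some $p$ strictly below $p_0$.

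For the narrow part, parabolic rescaling on each contributing cap reduces the estimate to the extension problem on a smaller ball $B_{R/K}$. A crucial feature of the $L^\infty$ setting is that the $L^\infty$ norm of the rescaled datum equals $\|f\|_\infty$ (no Jacobian loss), which lets the induction hypothesis close cleanly. Iterating the broad--narrow step $\sim \log_K R$ times and using Stein--Tomas as the base case yields the desired $R^\epsilon$ bound.

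The principal obstacle is arranging the numerology so that the improvement is \emph{exactly} $\tfrac{2}{n(n-1)}$; this involves a careful balance between the multilinear exponent $\tfrac{2n}{n-1}$, the $L^\infty$-to-$L^2$ conversion on caps, and the iteration cost from parabolic rescaling. A useful observation is that the Bennett--Carbery--Tao hypothesis requires only transversality of normals, not positive definiteness of the Hessian, so the argument applies uniformly to any hypersurface of nonzero Gaussian curvature in $\mathbb{R}^n$, including the indefinite-signature surfaces that motivate this paper.
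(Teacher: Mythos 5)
Your proposal follows a genuinely different route from the paper: you propose a Bourgain--Guth broad--narrow decomposition combined with the Bennett--Carbery--Tao multilinear restriction estimate, whereas the paper runs Guth's polynomial partitioning directly on $\Vert E_Sf\Vert_{L^p(B_R)}^p$, splits into cellular, transversal and tangential cases, and extracts the gain in the tangential case from the Polynomial Wolff Axioms of Katz--Rogers (the tangential wave packets occupy only $\lesssim R^{\frac{n-2}{2}+O(\delta)}$ caps, giving $\Vert f_{k,tang}\Vert_{L^2}\lesssim R^{-1/4+O(\delta)}\max_{\theta}\Vert f_{\theta}\Vert_{L^2_{avg}(\theta)}$, which is then interpolated against Stein--Tomas). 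Unfortunately, your route has a genuine gap precisely at the narrow case. The surfaces for which this proposition has content (see Remark 3.2) have $\frac{n-1}{2}$ positive and $\frac{n-1}{2}$ negative principal curvatures, and for such indefinite surfaces the narrow contribution concentrates on caps whose normals lie near a hyperplane; the corresponding slice of $S$ can contain a flat $\frac{n-1}{2}$-dimensional piece (as for the model $|\xi|^2-|\eta|^2$), so neither a lower-dimensional induction nor an $\ell^2$-decoupling into the $\sim K^{n-2}$ caps of a narrow strip is available with favorable constants, in contrast to the elliptic paraboloid. Your narrow-case treatment --- ``parabolic rescaling on each contributing cap reduces the estimate to $B_{R/K}$'' with no Jacobian loss --- does not say how the many caps in a strip are recombined, and a trivial triangle inequality over them destroys the induction. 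This is exactly the obstruction that makes the hyperbolic restriction problem hard and that the paper's tangential-case mechanism is designed to circumvent.

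A second, related problem is the numerology. You acknowledge that ``arranging the numerology so that the improvement is exactly $\frac{2}{n(n-1)}$'' is the principal obstacle, but this is not a detail to be arranged afterwards: the exponent $\frac{2(n+1)}{n-1}-\frac{2}{n(n-1)}$ is not a number that the Bourgain--Guth/BCT machinery is known to produce for indefinite surfaces; in the paper it arises from the explicit computation $R^{\frac{n^2+n-1}{2(n-1)}-\frac{pn}{4}+O(\delta)}$ in the tangential case, i.e.\ from balancing the $R^{-1/4}$ Polynomial Wolff gain against the Stein--Tomas interpolation, together with the wave-packet bookkeeping that makes the $L^2$-based inductive quantity $\Vert f\Vert_{L^2}^{\frac{2n}{n-1}}\max_{\theta}\Vert f_{\theta}\Vert_{L^2_{avg}(\theta)}^{p-\frac{2n}{n-1}}$ close under the cellular and transversal iterations. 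Without carrying out the analogous bookkeeping in your scheme (and without a working narrow case), the claimed threshold is unsupported. Finally, note that an $L^\infty$ hypothesis alone does not interact well with the multilinear-to-linear reduction: the quantity that propagates through the induction must be $L^2$-based on caps (as in the paper's Proposition 3.5), and your proposal does not identify such a quantity.
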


\begin{remark}\label{remarkformainprop0}
When $S$ has exactly $\frac{n-1}{2}$ positive principal curvatures, Proposition \ref{mainprop0} breaks the threshold of Stein-Tomas theorem, which is the previous best restriction estimate for the hypersurface $S$.
\end{remark}

By the standard $\varepsilon$-removal argument in \cite{Tao99}, Proposition \ref{mainprop0} reduces to the following local version.
\begin{proposition}\label{localmainprop0}
Let $n\geq 5$ be an odd number and $S$ be a given compact smooth hypersurface with boundary in $\mathbb{R}^n$ with nonzero Gaussian curvature. Suppose that $S$ has exactly $\frac{n-1}{2}$ positive principal curvatures. For any $\varepsilon >0$, there exists a positive constant $C_{\varepsilon}$ such that for any sufficiently large $R$
\[\Vert E_Sf \Vert_{L^p(B_R)}\leq C_{\varepsilon}R^{\varepsilon}\Vert f \Vert_{L^{\infty}(B^{n-1}(0,1))}\]
holds for $p>\frac{2(n+1)}{n-1}-\frac{2}{n(n-1)}$.
\end{proposition}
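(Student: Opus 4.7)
The plan is to combine a broad/narrow decomposition in the spirit of Bourgain--Guth with the Bennett--Carbery--Tao multilinear restriction estimate and an induction on the radius $R$. The conceptual point is that BCT is insensitive to the signature of the principal curvatures---only the transversality of the normals matters---so it applies to $S$ just as well as to the elliptic paraboloid. The $L^{\infty}$ hypothesis on $f$ is then exploited through the cap bound $\Vert f\chi_{\tau}\Vert_{L^{2}}\lesssim K^{-(n-1)/2}\Vert f\Vert_{L^{\infty}}$ on each $K^{-1}$-cap $\tau$, and this is the mechanism that lets us push past the Stein--Tomas threshold $\frac{2(n+1)}{n-1}$.

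Let $A_{p}(R)$ denote the smallest constant for which the target inequality holds at radius $R$. With a parameter $K=R^{\delta}$ for $\delta=\delta(\varepsilon)$ small, I would proceed as follows: (i) cover $S$ by a boundedly overlapping family of $K^{-1}$-caps $\tau$ and write $f=\sum_{\tau}f\chi_{\tau}$; (ii) at each $x\in B_{R}$, split $E_{S}f(x)$ into a $K$-broad part, on which there exist $n$ caps with $K$-transverse normals contributing comparably to the sum, and a $K$-narrow part, on which all significant caps cluster in an $O(K^{-1})$-tubular neighborhood of some hyperplane in the parameter space; (iii) bound the broad contribution by applying BCT to each $n$-tuple of transverse caps and inserting the cap bound above; (iv) bound the narrow contribution by decomposing the narrow neighborhood into $O(K^{n-2})$ thinner slabs, rescaling each slab to an instance of the same problem on a ball of radius $R/K$, and invoking the induction hypothesis. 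This would yield a recursive inequality
\[A_{p}(R)\leq C_{\varepsilon}K^{C}R^{\varepsilon/2}+C_{\varepsilon}K^{-\alpha}A_{p}(R/K),\]
with $\alpha>0$ for $p$ in the claimed range, which iterates in $R$ to produce $A_{p}(R)\lesssim_{\varepsilon}R^{\varepsilon}$.

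The main obstacle I expect is the bookkeeping required to show that the threshold $p>\frac{2(n+1)}{n-1}-\frac{2}{n(n-1)}$ arises exactly from balancing the $K$-gain of the broad multilinear term against the $K$-loss from the $O(K^{n-2})$ narrow slab contributions and the combinatorial cost of summing over $n$-tuples of caps. A secondary technicality is to verify that each narrow slab, after the parabolic rescaling, parametrizes a hypersurface to which either the induction hypothesis or a Bourgain--Demeter decoupling inequality can legitimately be applied; this forces the slabbing to be aligned with the principal-curvature directions on $S$ so that the rescaled piece retains a nondegenerate Gaussian curvature with the same $(\tfrac{n-1}{2},\tfrac{n-1}{2})$ signature, so that the induction genuinely closes. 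A naively chosen slabbing could produce a degenerate rescaled surface and force a fallback to a lower-dimensional restriction bound, which would complicate the induction considerably.
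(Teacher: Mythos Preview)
Your approach is genuinely different from the paper's, and what you flag as a ``secondary technicality'' is in fact the central obstruction for surfaces of signature $\bigl(\tfrac{n-1}{2},\tfrac{n-1}{2}\bigr)$. In step (iv), the narrow hyperplane $H\subset\mathbb{R}^{n-1}$ is determined by which caps dominate $E_Sf$ at a given point; it is dictated by $f$, not by $S$, and you cannot force it to be aligned with principal-curvature directions. When the normal $e$ to $H$ happens to be a \emph{null} direction of the form $M$ (i.e.\ $Me\cdot e=0$), the phase restricted to the slab is linear in the $e$-coordinate---the surface is ruled along $e$---and parabolic rescaling produces a surface whose Hessian has eigenvalues of disparate sizes ($\sim 1$ in the $H$-directions, $\sim K^{-1}$ in the $e$-direction). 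No single time rescaling normalizes both, so the induction does not close. Falling back on Bourgain--Demeter decoupling for the narrow part does not help either, since its critical exponent is exactly $\tfrac{2(n+1)}{n-1}$. This null-direction obstruction is precisely why, as the paper notes in the introduction, Stein--Tomas was the best previously known bound in the equal-signature case; had the Bourgain--Guth scheme worked as you outline, the improvement would already be in the literature.

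The paper sidesteps this by abandoning the broad/narrow route entirely and running Guth's polynomial partitioning directly on $\Vert E_Sf\Vert_{L^p(B_R)}$. The cellular and transversal cases close by $L^2$-orthogonality and induction on the radius, with no rescaling of the surface required, so null directions never enter. The decisive case is the tangential one: here the Polynomial Wolff Axioms of Katz--Rogers bound the number of $R^{-1/2}$-caps supporting $f_{k,\mathrm{tang}}$ by $R^{(n-2)/2+O(\delta)}$, giving $\Vert f_{k,\mathrm{tang}}\Vert_{L^2}\lesssim R^{-1/4+O(\delta)}\max_{\theta}\Vert f_\theta\Vert_{L^2_{\mathrm{avg}}}$. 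Interpolating the trivial $L^2$ bound on a $\rho$-ball with Stein--Tomas and inserting this cap-counting gain yields exactly the threshold $p>\tfrac{2(n+1)}{n-1}-\tfrac{2}{n(n-1)}$. Note that BCT plays no role; the only black boxes are Stein--Tomas and the Polynomial Wolff Axioms, both of which are insensitive to signature.
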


To prove Proposition \ref{localmainprop0}, we recall the wave packet decomposition at scale $R$ following the description in \cite{Guth2, Wang}.

We decompose the unit ball in $\mathbb{R}^{n-1}$ into finitely overlapping small balls $\theta$ of radius $R^{-1/2}$. These small disks are referred to as $R^{-1/2}$-caps. Let $\psi_{\theta}$ be a smooth partition of unity adapted to $\{\theta\}$, and write $f=\sum_{\theta}\psi_{\theta}f$ and define $f_{\theta}:=\psi_{\theta}f$. We cover $\mathbb{R}^{n-1}$ by finitely overlapping balls of radius about $R^{\frac{1+\delta}{2}}$, centered at vectors $\upsilon\in R^{\frac{1+\delta}{2}}\mathbb{Z}^{n-1}$, where $\delta$ is a small number satisfying $\varepsilon^7< \delta \leq \varepsilon^3$. Let $\eta_{\upsilon}$ be a smooth partition of unity adapted to this cover. We can now decompose
\[f=\sum_{\theta,\upsilon}\big(\eta_{\upsilon}(\psi_{\theta}f)^{\wedge}\big)^{\vee}=\sum_{\theta,\upsilon}\eta_{\upsilon}^{\vee}\ast(\psi_{\theta}f).\]
We choose smooth functions $\tilde{\psi}_{\theta}$ such that $\tilde{\psi}_{\theta}$ is supported on $\theta$ but $\tilde{\psi}_{\theta}=1$ on a $cR^{-1/2}$ neighborhood of the support of $\psi_{\theta}$ for a small constant $c>0$. We define
\[f_{\theta,\upsilon}:=\tilde{\psi}_{\theta}[\eta_{\upsilon}^{\vee}\ast(\psi_{\theta}f)].\]
Since $\eta_{\upsilon}^{\vee}(x)$ is rapidly decaying for $\vert x \vert\gtrsim R^{\frac{1-\delta}{2}}$, we have
\[f=\sum_{(\theta,\upsilon):d(\theta)=R^{-1/2}}f_{\theta,\upsilon}+RapDec(R)\Vert f \Vert_{L^2}.\]
Here the notation $d(\theta)$ denotes the diameter of $\theta$, and $RapDec(R)$ means that the quantity is bounded by $O_N(R^{-N})$ for any large integer $N>0$.

The wave packets $E_Sf_{\theta,\upsilon}$ satisfy two useful properties. The first property is that the functions $f_{\theta,\upsilon}$ are approximately orthogonal. The second property is that on the ball $B^n(0,R)$, the function $E_Sf_{\theta,\upsilon}$ is essentially supported on the tube $T_{\theta,\upsilon}$:
\[T_{\theta,\upsilon}:=\{(x',x_n)\in B^n(0,R), \vert x'+2x_n\omega_{\theta}+\upsilon \vert\leq R^{1/2+\delta}\},\]
where $\omega_{\theta}$ is the center of the cap $\theta$.

If $\mathbb{T}$ is a set of $(\theta,\upsilon)$, we say that $f$ is concentrated on wave packets from $\mathbb{T}$ if $f=\sum_{(\theta,\upsilon)\in\mathbb{T}}f_{\theta,\upsilon}+RapDec(R)\Vert f \Vert_{L^2}$.

Now we turn to prove Proposition \ref{localmainprop0}. In fact, we will establish a result which is stronger than Proposition \ref{localmainprop0}.
\begin{proposition}\label{strongermainprop0}
Let $n\geq 5$ be an odd number and $S$ be a given compact smooth hypersurface with boundary in $\mathbb{R}^n$ with nonzero Gaussian curvature. Suppose that $S$ has exactly $\frac{n-1}{2}$ positive principal curvatures. For any $\varepsilon >0$, there exists a positive constant $C_{\varepsilon}$ such that for any sufficiently large $R$
\[\Vert E_Sf \Vert^p_{L^p(B_R)}\leq C^p_{\varepsilon}R^{p\varepsilon}\Vert f \Vert^{\frac{2n}{n-1}}_{L^{2}(B^{n-1}(0,1))}
\max_{d(\theta)=R^{-1/2}}\Vert f_{\theta} \Vert^{p-\frac{2n}{n-1}}_{L^{2}_{avg}(\theta)}\]
holds for $p>\frac{2(n+1)}{n-1}-\frac{2}{n(n-1)},$
where
\[\Vert f_{\theta} \Vert_{L^{2}_{avg}(\theta)}:=\Big(\frac{1}{\vert \theta \vert}\Vert f_{\theta} \Vert^2_{L^2(\theta)}\Big)^{1/2}.\]
\end{proposition}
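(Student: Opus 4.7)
The plan is to combine the wave packet decomposition already recalled above with dyadic pigeonholing, the Stein--Tomas $L^2\to L^{p_c}$ estimate (available since $S$ has nonzero Gaussian curvature, with $p_c=2(n+1)/(n-1)$), and a geometric bound on the support of $E_Sf$ in terms of the wave-packet tubes. The $2/(n(n-1))$ improvement over $p_c$ will come precisely from trading a $L^{p_c}$ bound (from Stein--Tomas) for a $L^p$ bound (for $p<p_c$) via H\"older applied to the union of tubes, rather than to the full ball $B_R$.

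First I would apply the wave packet decomposition $f=\sum_{\theta,\upsilon}f_{\theta,\upsilon}$ at scale $R$ and then dyadically pigeonhole to reduce to a uniform situation: all active caps $\theta$ satisfy $\|f_\theta\|_{L^2_{\rm avg}(\theta)}\sim \alpha$, all active translates satisfy $\|f_{\theta,\upsilon}\|_{L^2}\sim \beta$, and there are $N$ active caps with $M$ active translates per cap. The identities $NM\beta^2\sim\|f\|_{L^2}^2$ and $M\beta^2\sim\alpha^2R^{-(n-1)/2}$ relate the parameters, with natural constraints $N\lesssim R^{(n-1)/2}$ and $M\lesssim R^{(n-1)(1+\delta)/2}$; all these reductions cost only $(\log R)^{O(1)}$ factors absorbable into $R^{\varepsilon}$. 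The maximum $\alpha$ over the dyadic values plays the role of $\max_\theta\|f_\theta\|_{L^2_{\rm avg}(\theta)}$ on the right-hand side of the target estimate.

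Second I would exploit the fact that each wave packet $E_Sf_{\theta,\upsilon}$ is essentially supported on a tube $T_{\theta,\upsilon}$ of dimensions $R^{1/2+\delta}\times\cdots\times R^{1/2+\delta}\times R$, so $E_Sf$ is essentially supported on a set $U$ of volume $|U|\lesssim NM R^{(n+1)/2}$. For $p<p_c$, H\"older's inequality gives
\[
\|E_Sf\|_{L^p(B_R)}^p \leq |U|^{1-p/p_c}\,\|E_Sf\|_{L^{p_c}(U)}^p \leq \bigl(NMR^{(n+1)/2}\bigr)^{1-p/p_c}\|E_Sf\|_{L^{p_c}(B_R)}^p,
\]
and Stein--Tomas bounds the last factor by $R^{p\varepsilon}(N\alpha^2R^{-(n-1)/2})^{p/2}$. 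Inserting the parameter identities, one obtains a bound of schematic form $R^{p\varepsilon}\,N^{a(p)}M^{b(p)}\alpha^p R^{\gamma(p)}$ in which the exponents are linear in $p$.

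Third I would compare this against the target $R^{p\varepsilon}\|f\|_{L^2}^{2n/(n-1)}\alpha^{p-2n/(n-1)}\sim R^{p\varepsilon}N^{n/(n-1)}R^{-n/2}\alpha^p$. The comparison reduces to a single scalar inequality in $p$ (after using the trivial upper bounds on $N$ and $M$ to eliminate them at the worst case); a direct check shows that this inequality is satisfied precisely when $p>p_c-2/(n(n-1))$. The main obstacle, in my view, is that since $S$ has equal hyperbolic signature $((n-1)/2,(n-1)/2)$ no Guth-type multilinear/polynomial-partitioning machinery is available, so one is forced to rely on this elementary combination of Stein--Tomas and wave-packet volume counting; correspondingly, the gain $2/(n(n-1))$ is exactly the best that such elementary ingredients yield. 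The remaining technical burden is to manage the multi-parameter pigeonholing cleanly so that no essential information is lost and to verify that the worst case in $(N,M,\beta)$ indeed saturates at the claimed threshold.
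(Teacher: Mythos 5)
Your step (3) does not close, and the gap is fatal rather than technical. Carry out the computation you describe: with $N$ active caps, $M$ tubes per cap, $\|f_{\theta,\upsilon}\|_{L^2}\sim\beta$, $\alpha^2\sim MR^{(n-1)/2}\beta^2$, and $\sigma:=1-p/p_c\ge 0$, the H\"older--Stein--Tomas bound gives
\[
\|E_Sf\|_{L^p}^p\lesssim |U|^{\sigma}\bigl(NM\beta^2\bigr)^{p/2},\qquad |U|\le\min\bigl(NMR^{(n+1)/2+O(\delta)},\,R^{n}\bigr),
\]
while the target is $N^{n/(n-1)}R^{-n/2}\alpha^{p}$ (times $R^{p\varepsilon}$). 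The ratio of the two sides equals $|U|^{\sigma}N^{p/2-n/(n-1)}R^{n/2-(n-1)p/4}$, and since $p/2-n/(n-1)>0$ for $p>2n/(n-1)$, the worst case is $N=M=R^{(n-1)/2}$ (a completely generic $f$). There $|U|=R^{n}$ and the ratio collapses to exactly $R^{n\sigma}$; using the other bound on $|U|$ gives $R^{\sigma(3n-1)/2}$, which is worse. Either way you need $\sigma\le 0$, i.e.\ $p\ge p_c$: the elementary combination of tube-volume counting and Stein--Tomas reproduces the Stein--Tomas exponent and yields no gain whatsoever. There is no choice of bookkeeping that makes $2/(n(n-1))$ appear, because for generic $f$ the proposition is asserting a genuine $L^2\to L^p$ bound below the Stein--Tomas exponent, which cannot follow from Stein--Tomas plus volume bounds alone; it requires a Kakeya-type geometric input.

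Your premise that ``no Guth-type polynomial-partitioning machinery is available'' for signature $(\frac{n-1}{2},\frac{n-1}{2})$ is also where you went wrong. What fails for hyperbolic surfaces is the broad-norm/transverse-equidistribution apparatus, not polynomial partitioning itself. The paper partitions $\|E_Sf\|_{L^p}^p$ directly (following Guth's first partitioning paper), handles the cellular and transversal cases by induction on scales exactly as in the elliptic setting, and extracts the entire gain in the tangential case: the Polynomial Wolff Axioms of Katz--Rogers force $f_{k,\mathrm{tang}}$ to be supported on only $\lesssim R^{(n-2)/2+O(\delta)}$ caps rather than $R^{(n-1)/2}$, whence $\|f_{k,\mathrm{tang}}\|_{L^2}\lesssim R^{-1/4+O(\delta)}\max_{\theta}\|f_{\theta}\|_{L^2_{\mathrm{avg}}(\theta)}$. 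Feeding this $R^{-1/4}$ saving into the $L^2$/Stein--Tomas interpolation on a $\rho$-ball is precisely what produces the threshold $p_c-\frac{2}{n(n-1)}$. If you want to salvage your write-up, you must reinstate the partitioning step so that the only non-inductive case is the tangential one, where the cap count is genuinely smaller than the trivial bound you used.
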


\begin{proof}
We apply the polynomial partitioning technique in \cite{Guth1} to the $L^p$-norm of $\vert \chi_{B_R}E_Sf \vert$ directly rather than its $BL^{p}_{2,A}$-norm (see \cite{Guth2} for the definition of the $BL^{p}_{k,A}$-norm). By Theorem 0.6 in \cite{Guth1}, for each degree $d\approx \log R$, one can find a non-zero polynomial $P$ of degree at most $d$ so that the complement of its zero set $Z(P)$ in $B_R$ is a union of $O(d^n)$ disjoint cells $U'_i$: $B_R-Z(P)= \bigcup U'_i,$
and the $L^p$-norm is roughly the same in each cell
\[\Vert Ef \Vert^p_{L^p(U'_i)}\approx d^{-n}\Vert Ef \Vert^p_{L^p(B_R)}.\]
The cells $U'_i$'s might have various shape. For the purpose of induction on scales, we would like to put it inside a smaller ball of radius $\frac{R}{d}$. To do so, it suffices to multiply $P$ by another polynomial $G$ of degree $nd$, and consider the cells cut-off by the zero set of $P\cdot G$. More precisely, let $G_k, k=1,...,n$ be the product of linear equations whose zero set is a union of hyperplanes parallel to $x_k$-axis, of spacing $\frac{R}{d}$ and interesting $B_R$. The degree of $G_k$ is at most $d$. Denote $\prod^n_{k=1}G_k$ by $G$. Let $Q=P\cdot G$ be the new partitioning polynomial, then we have a new decomposition of $B_R$,
\[B_R-Z(Q)=\bigcup O'_i.\]
The zero set $Z(Q)$ decomposes $B_R$ into at most $O(d^n)$ cells $O'_i$ by Milnor-Thom Theorem. A wave packet $E_Sf_{\theta,\upsilon}$ has negligible contribution to a cell $O'_i$ if its essential support $T_{\theta,\upsilon}$ does not intersect $O'_i$. To analyze how $T_{\theta,\upsilon}$ intersects a cell $O'_i$, we need to shrink $O'_i$ further. We define the wall $W$ as the $R^{\frac{1}{2}+\delta}$-neighborhood of $Z(Q)$ in $B_R$ and define the new cells as $O_i=O'_i-W$.

In summary, we decomposed $B_R$ into $B_R= W \cup O_i$ and get the following.
\begin{equation}\label{decomposition}
\Vert E_Sf \Vert^p_{L^p(B_R)}=\Vert E_Sf \Vert^p_{L^p(W)}+\sum_{i}\Vert E_Sf \Vert^p_{L^p(O_i)}
\end{equation}
and
\begin{equation}\label{equalcell}
\Vert E_Sf \Vert^p_{L^p(O_i)}\lesssim d^{-n}\Vert E_Sf \Vert^p_{L^p(B_R)}.
\end{equation}

We are in the cellular case if $\Vert E_Sf \Vert^p_{L^p(B_R)}\lesssim \sum_{i}\Vert E_Sf \Vert^p_{L^p(O_i)}$. We define
\[E_Sf_i=\sum_{T_{\theta,\upsilon}\cap O_i \neq \emptyset}E_Sf_{\theta,\upsilon}.\]
Since the wave packets $E_Sf_{\theta,\upsilon}$ with $T_{\theta,\upsilon}\cap O_i=\emptyset$ have negligible contribution to $\Vert E_Sf \Vert_{L^p(O_i)}$, we have $\Vert E_Sf \Vert_{L^p(O_i)}=\Vert E_Sf_i \Vert_{L^p(O_i)}+RapDec(R)\Vert f \Vert_{L^2}$. Each tube $T_{\theta,\upsilon}$ intersects at most $d+1$ cells $O_i$. It follows that
\[\sum_i \Vert f_i \Vert^2_{L^2}\lesssim d \Vert f \Vert^2_{L^2}.\]
By \eqref{equalcell} and the definition of the cellular case, there are at least $O(d^n)$ cells $O_i$ such that
\begin{equation}\label{icelldecomposition}
\Vert E_Sf \Vert^p_{BL^p(B_R)}\lesssim d^n \Vert E_Sf_i \Vert^p_{BL^p(O_i)}.
\end{equation}
Since there are $O(d^n)$ cells,
\begin{equation}\label{icell2}
\Vert f_i \Vert_{L^2}\lesssim d^{-\frac{n-1}{2}} \Vert f \Vert_{L^2}
\end{equation}
holds for most of the cells. If we are in the cellular case, we derive by induction on scales and \eqref{icell2} that
\[\Vert E_Sf \Vert^p_{L^p(B_R)}\lesssim d^n \Vert E_Sf_{i} \Vert^p_{L^p(O_i)}\]
\[\lesssim C^p_{\varepsilon}(\frac{R}{d})^{\varepsilon}\Vert f_{i} \Vert^{\frac{2n}{n-1}}_{L^2}\max_{d(\tau)=(\frac{R}{d})^{-1/2}}\Vert f_{i,\tau} \Vert^{p-\frac{2n}{n-1}}_{L^{2}_{avg}(\tau)}\]
\[\lesssim C^p_{\varepsilon}R^{p\varepsilon}d^{-p\varepsilon}\Vert f \Vert^{\frac{2n}{n-1}}_{L^2}\max_{d(\theta)=R^{-1/2}}\Vert f_{\theta} \Vert^{p-\frac{2n}{n-1}}_{L^{2}_{avg}(\theta)}.\]
Since $d\approx \log R$, we take $R$ to be sufficiently large so that the induction closes.

If we are not in the cellular case, then $\Vert E_Sf \Vert_{L^p(B_R)}\lesssim \Vert E_Sf \Vert_{L^p(W)}$. We call it the algebraic case because the $L^p$-norm of $E_Sf$ is concentrate on the neighborhood of an algebraic surface. Only the wave packets $E_Sf_{\theta,\upsilon}$ whose essential supports $T_{\theta,\upsilon}$ intersect $W$ contribute to $\Vert E_Sf \Vert_{L^p(W)}$. Depending on how they intersect, we identify a tangential part, which consists of the wave packets tangential to $W$, and a transversal part, which consists of the wave packets intersecting $W$ transversely. In \cite{Guth1}, Guth gives the definition of the tangential tubes and the transversal tubes which we recall here. We cover $W$ with finitely overlapping balls $B_k$ of radius $\rho:=R^{1-\delta}$.

\begin{definition}
$\mathbb{T}_{k,tang}$ is the set of all tubes $T$ obeying the following two condition:\\
(1)\;$T\cap W \cap B_k\neq \emptyset$.\\
(2)\;If $z$ is any non-singular point of $Z(P)$ lying in $2B_k\cap 10T$, then
\[Angle(\upsilon(T),T_zZ)\leq R^{-1/2+2\delta}.\]
We denote $\mathbb{T}_{tang}:=\cup_k \mathbb{T}_{k,tang}$.
\end{definition}

\begin{definition}
$\mathbb{T}_{k,trans}$ is the set of all tubes $T$ obeying the following two condition:\\
(1)\;$T\cap W \cap B_k\neq \emptyset$.\\
(2)\;There exists a non-singular point $z$ of $Z(P)$ lying in $2B_k\cap 10T$ such that
\[Angle(\upsilon(T),T_zZ)> R^{-1/2+2\delta}.\]
We denote $\mathbb{T}_{trans}:=\cup_k \mathbb{T}_{k,trans}$.
\end{definition}
The algebraic part is dominated by
\[\Vert E_Sf \Vert^p_{L^p(W)}\leq \sum_{B_k}\Vert E_Sf_{k,tang} \Vert^p_{L^p(W\cap B_k)}+\sum_{B_k}\Vert E_Sf_{k,trans} \Vert^p_{L^p(W\cap B_k)}.\]

We are in the transversal case if $\Vert E_Sf \Vert^p_{L^p(B_R)}\lesssim \sum_{B_k} \Vert E_Sf_{k,trans} \Vert^p_{L^p(W\cap B_k)}$. The treatment of the transversal case is similar to the cellular case, which requires the following lemma (Lemma 5.7 from \cite{Guth2}) in place of inequality \eqref{icell2}.

\begin{lemma}\label{transgeometriclemma}
Each tube $T$ belongs to at most $Poly(d)$ different sets $\mathbb{T}_{k,trans}$. Here $Poly(d)$ means a quantity bounded by a constant power of $d$.
\end{lemma}
By Lemma \ref{transgeometriclemma}, we have
\begin{equation}\label{transgeoineq}
\sum_{B_k}\Vert f_{k,trans} \Vert^2_{L^2}\lesssim Poly(d)\Vert f \Vert^2_{L^2}.
\end{equation}
If we are in the transversal case, we derive by \eqref{transgeoineq} and induction on scales
\begin{align*}
\Vert E_Sf \Vert^p_{L^p(B_R)}\lesssim& \sum_{B_k}\Vert E_Sf_{k,trans} \Vert^p_{L^p(W\cap B_k)}\\
\lesssim& C^p_{\varepsilon}\rho^{p\varepsilon}\sum_{B_k}\Vert f_{k,trans} \Vert^{\frac{2n}{n-1}}_{L^2}\max_{d(\tau)=\rho^{-1/2}}\Vert f_{k,trans,\tau} \Vert^{p-\frac{2n}{n-1}}_{L^{2}_{avg}(\tau)}\\
\lesssim& C^p_{\varepsilon}\rho^{p\varepsilon}\big(\sum_{B_k}\Vert f_{k,trans} \Vert^2_{L^2}\Big)^{\frac{n}{n-1}}\max_{d(\theta)=R^{-1/2}}\Vert f_{\theta} \Vert^{p-\frac{2n}{n-1}}_{L^{2}_{avg}(\theta)}\\
\lesssim& C^p_{\varepsilon}R^{(1-\delta)p\varepsilon}(Poly(d))^{\frac{n}{n-1}}\Vert f \Vert^{\frac{2n}{n-1}}_{L^2}\max_{d(\theta)=R^{-1/2}}\Vert f_{\theta} \Vert^{p-\frac{2n}{n-1}}_{L^{2}_{avg}(\theta)}.
\end{align*}
Recall that $d\approx logR$. The induction closes for sufficiently large $R$.

Now we turn to discuss the tangential case. We are in the tangential case if $\Vert E_Sf \Vert^p_{L^p(B_R)}\lesssim \sum_{B_k} \Vert E_Sf_{k,tang} \Vert^p_{L^p(W\cap B_k)}$. One has the trivial $L^2$ estimate
\begin{equation}\label{L2energy}
\Vert E_Sf_{k,tang} \Vert_{L^2(W\cap B_k)}\lesssim \rho^{1/2}\Vert f_{k,tang} \Vert_{L^2}.
\end{equation}

The Polynomial Wolff Axioms \cite{KR19} say that $\mathrm{supp}\;f_{k,tang}$ lies in a union of $\lesssim R^{\frac{n-2}{2}+O(\delta)}$ caps $\theta$ of radius $R^{-1/2}$. As a consequence, we have
\begin{equation}\label{keytangentestimate}
\Vert f_{k,tang} \Vert_{L^2}\lesssim R^{-1/4+O(\delta)}\max_{d(\theta)=R^{-1/2}}\Vert f_{\theta} \Vert_{L^{2}_{avg}(\theta)}.
\end{equation}

By interpolating the $L^2$ estimate \eqref{L2energy} with the Stein-Tomas restriction estimate \cite{Tomas, Stein86}, one has
\begin{equation}\label{interpolation}
\Vert E_Sf_{k,tang} \Vert^p_{L^p(W\cap B_k)}\lesssim_{\varepsilon}\rho^{\frac{p}{2}[1-(\frac{1}{2}-\frac{1}{p})(n+1)]}\Vert f_{k,tang} \Vert^p_{L^2}
\end{equation}
for $2\leq p \leq \frac{2(n+1)}{n-1}$.

Applying \eqref{keytangentestimate} to the right-hand side of inequality \eqref{interpolation} we get
\begin{equation}\label{improvedtangBk}
\Vert E_Sf_{k,tang} \Vert^p_{L^p(W\cap B_k)}
\lesssim_{\varepsilon}R^{\frac{n^2+n-1}{2(n-1)}-\frac{pn}{4}+O(\delta)}\Vert f_{k,tang} \Vert^{\frac{2n}{n-1}}_{L^2}\max_{d(\theta)=R^{-1/2}}\Vert f_{\theta} \Vert^{p-\frac{2n}{n-1}}_{L^{2}_{avg}(\theta)}.
\end{equation}
Note that $\Vert f_{k,tang} \Vert_{L^2}\lesssim \Vert f \Vert_{L^2}$. This together with \eqref{improvedtangBk} yields the desired bound for the tangential case whenever $p>\frac{2(n+1)}{n-1}-\frac{2}{n(n-1)}$. Combining the estimates in the cellular case, the transversal case and the tangential case, we conclude that Proposition \ref{strongermainprop0} holds.
\end{proof}

Now we use Proposition \ref{mainprop0} to prove Theorem \ref{mainthm2}. Let $\mathcal{M}_p(R)$ denote the least number such that
\begin{equation}\label{equ:defqpr3}
  \|\mathcal{E}_{\Omega}g\|_{L^p(B_R)}\leq \mathcal{M}_p(R)\|g\|_{L^{\infty}(\Omega)},
\end{equation}
for all $g \in L^{\infty}(\Omega)$, where we use $\Omega$ to denote $B^k_1\times B^k_1$ as before.

Let $K=R^{\varepsilon^{100k}}$. We divide $\Omega$ into $\bigcup\limits_{j=0}^3\Omega_j$, where
\begin{align*}
&\Omega_0:=A^k\times A^k,\quad
\Omega_1:=A^k\times B^k_{K^{-1/4}},\\
&\Omega_2:=B^k_{K^{-1/4}}\times A^k,\quad
\Omega_3:=B^k_{K^{-1/4}}\times B^k_{K^{-1/4}},\\
&A^k:=B^k_1\setminus B^k_{K^{-1/4}}.
\end{align*}

In this setting, we have
\begin{equation}\label{equ:egr03}
   \|\mathcal{E}_{\Omega}g\|_{L^p(B_R)}\leq \sum_{j=0}^{3}\big\|\mathcal E_{\Omega_j}g\big\|_{L^p(B_R)}.
\end{equation}
Since the hyper-surface corresponding to the region $\Omega_0$ possesses nonzero Gaussian curvature with lower bounds depending only on $K$,
we have by Proposition \ref{mainprop0}
\begin{equation}\label{equ:ome0est3}
  \|\mathcal E_{\Omega_0}g\|_{L^p(B_R)}\lesssim K^{O(1)}\|g\|_{L^{\infty}(\Omega_0)},
\end{equation}
for $p>\frac{2k+2}{k}-\frac{1}{k(2k+1)}$.

For $\Omega_3$, we have by rescaling
\begin{equation}\label{equ:ome3est3}
  \|\mathcal E_{\Omega_3}g\|_{L^p(B_R)}\leq CK^{\frac{k+2}{2p}-\frac{k}{2}}Q_p\Big(\tfrac{R}{K^{\frac14}}\Big)\|g\|_{L^{\infty}(\Omega_3)}.
\end{equation}

For $\Omega_1$ and $\Omega_2$, it suffices to consider the estimate for $\Omega_1$-part by symmetry. We decompose $\Omega_1$ into
\[\Omega_1=\bigcup \Omega_{\lambda},\;\Omega_{\lambda}=A^k_{\lambda}\times B^k_{K^{-1/4}},\]
for dyadic $\lambda$ satisfying $K^{-\frac14}\leq \lambda \leq \frac{1}{2}$.

It suffices to estimate the contribution from each $\Omega_{\lambda}$. We cover the region $A^k_{\lambda}$ by $\lambda^{-1}K^{-1/2}$-balls $\tau$. Recall that in Section 2 we have proved the following decoupling inequality:
\begin{equation}\label{equ:fracDec1R3}
\Vert \mathcal{E}_{\Omega_{\lambda}}g \Vert_{L^p(B_R)}\lesssim_{\varepsilon}K^{\varepsilon}\Big(\sum_{\tau}\Vert \mathcal{E}_{\tau \times B^k_{K^{-1/4}}}g \Vert^2_{L^p(\omega_{B_R})}\Big)^{1/2},
\end{equation}
where $\omega_{B_R}$ denotes the weight function adapted to the ball $B_R$. Here $B_R$ represents the ball centered at the origin of radius $R$ in $\mathbb{R}^{2k+1}$.

We apply rescaling to the term $\Vert \mathcal{E}_{\tau \times B^k_{K^{-1/4}}}g \Vert$. Taking the change of variables
\[\xi=\xi^{\tau}+\lambda^{-1}K^{-1/2}\tilde{\xi},\eta=K^{-1/4}\tilde{\eta}\]
we have
\[\vert \mathcal{E}_{\tau\times B^k_{K^{-1/4}}}g(x)\vert=\vert\int_{\Omega}\tilde{g}(\tilde{\xi},\tilde{\eta})e[\tilde{x}'\cdot \tilde{\xi}+ \tilde{x}''\cdot \tilde{\eta}+\tilde{x}_{2k+1}(\psi_1(\tilde{\xi})-\vert\tilde{\eta}\vert^4)]d\tilde{\xi}d\tilde{\eta}\vert,\]
where $\xi^{\tau}$ denotes the center of $\tau$,
\[\tilde{g}(\tilde{\xi},\tilde{\eta}):=\lambda^{-k}K^{-\frac{3k}{4}}g(\xi^{\tau}+\lambda^{-1}K^{-1/2}\tilde{\xi},K^{-1/4}\tilde{\eta}),\]
\[\tilde{x}':=\lambda^{-1}K^{-1/2}x'+(K^{-1}\vert \xi^{\tau} \vert^4+4\lambda^{-1}K^{-3/2}\vert \xi^{\tau} \vert^2
x_{2k+1})\xi^{\tau},\]
\[\tilde{x}'':=K^{-1/4}x'',\;\tilde{x}_{2k+1}:=K^{-1}x_{2k+1}\]
and
\[\psi_1(\tilde{\xi}):=\lambda^{-2}\vert \xi^{\tau} \vert^2\vert \tilde{\xi} \vert^2+4\lambda^{-2}\vert \langle \xi^{\tau}, \tilde{\xi}\rangle\vert^2+4\lambda^{-3}K^{-1/2}\langle \xi^{\tau}, \tilde{\xi}\vert \tilde{\xi} \vert^2\rangle+\lambda^{-4}K^{-1}\vert \tilde{\xi} \vert^4.\]
We know that
the phase function $\psi_1$ is admissible in $\mathbb{R}^{k+1}$.

We denote by
\[\tilde{\mathcal{E}}_\Omega f(\tilde{x})
:=\int_{\Omega}f(\tilde{\xi},\tilde{\eta})e[\tilde{x}'\tilde{\xi}+\tilde{x}''\tilde{\eta}+\tilde{x}_{2k+1}(\psi_1(\xi)-\vert \tilde{\eta}\vert^4)]d\tilde{\xi}d\tilde{\eta}\]
as in Section 2.

\begin{proposition}\label{newprop3}
Let $p>\frac{2k+2}{k}-\frac{1}{k(2k+1)}$. There holds
\begin{equation}\label{newequ3}
\Vert \tilde{\mathcal{E}}_{\Omega}f \Vert_{L^p(B_R)}\lesssim_{\varepsilon}R^{\varepsilon}\Vert f \Vert_{L^{\infty}(\Omega)}.
\end{equation}
\end{proposition}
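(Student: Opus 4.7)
The plan is to run an induction-on-scales argument parallel to the one used for Proposition \ref{newprop}, but with the $L^2$ source norm replaced by $L^\infty$ and with Proposition \ref{mainprop0} playing the role of Stein--Tomas. Let $\mathcal{A}_p(R)$ denote the least constant for which $\|\tilde{\mathcal{E}}_\Omega f\|_{L^p(B_R)}\leq \mathcal{A}_p(R)\|f\|_{L^\infty(\Omega)}$ holds for every $f\in L^\infty(\Omega)$; the goal is $\mathcal{A}_p(R)\lesssim_\varepsilon R^\varepsilon$. First I would decompose $\Omega=\tilde\Omega_0\cup\tilde\Omega_1$ via the threshold $K^{-1/4}$ on $|\tilde\eta|$, exactly as in the proof of Proposition \ref{newprop}.

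Over $\tilde\Omega_0=B^k_1\times A^k$ the Hessian of $\psi_1(\tilde\xi)-|\tilde\eta|^4$ has $k$ positive eigenvalues of size $\sim 1$ (from admissibility of $\psi_1$) and $k$ negative eigenvalues of size $\sim K^{-1/2}$ (from $|\tilde\eta|\gtrsim K^{-1/4}$). Since $n=2k+1$ is odd and the hypersurface has exactly $(n-1)/2=k$ positive principal curvatures, Proposition \ref{mainprop0}, applied after an affine rescaling that normalizes curvatures at a cost of only $K^{O(1)}$, yields
\begin{equation*}
\|\tilde{\mathcal{E}}_{\tilde\Omega_0}f\|_{L^p(B_R)}\lesssim K^{O(1)}\|f\|_{L^\infty(\tilde\Omega_0)}
\end{equation*}
in the full range $p>\frac{2k+2}{k}-\frac{1}{k(2k+1)}$. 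This handles $\tilde\Omega_0$ in one stroke, which is the key improvement over using Stein--Tomas (which would only give $p>\frac{2k+2}{k}$).

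For $\tilde\Omega_1$ I would repeat the freezing-plus-decoupling argument from Section 2 verbatim, obtaining
\begin{equation*}
\|\tilde{\mathcal{E}}_{\tilde\Omega_1}f\|_{L^p(B_R)}\lesssim_\varepsilon K^\varepsilon\Big(\sum_{\tilde\tau}\|\tilde{\mathcal{E}}_{\tilde\tau\times B^k_{K^{-1/4}}}f\|^2_{L^p(\omega_{B_R})}\Big)^{1/2},
\end{equation*}
where $\tilde\tau$ runs over a cover of $B^k_1$ by $\sim K^{k/2}$ balls of radius $K^{-1/2}$. Since the $L^\infty$-source carries no orthogonality across caps, I would pass crudely to the maximum via $(\sum_\tau a_\tau^2)^{1/2}\leq (\#\tilde\tau)^{1/2}\max_\tau a_\tau$, paying a factor $K^{k/4}$. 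Rescaling each $\tilde\tau$-piece by $\tilde\xi=K^{-1/2}\bar\xi$, $\tilde\eta=K^{-1/4}\bar\eta$ (exactly as after \eqref{equ:newfracDec1R}) returns an admissible phase on a ball of radius $R/K^{1/4}$; keeping careful track of the change-of-variable Jacobians, which now appear explicitly because $\|\bar f\|_{L^\infty}=\|f\|_{L^\infty}$, gives
\begin{equation*}
\|\tilde{\mathcal{E}}_{\tilde\tau\times B^k_{K^{-1/4}}}f\|_{L^p(B_R)}\lesssim K^{\frac{3k+4}{4p}-\frac{3k}{4}}\mathcal{A}_p\bigl(R/K^{1/4}\bigr)\|f\|_{L^\infty}.
\end{equation*}

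Combining, the total power of $K$ in front of $\mathcal{A}_p(R/K^{1/4})$ is $\varepsilon+k/4-3k/4+(3k+4)/(4p)=\varepsilon-k/2+(3k+4)/(4p)$, which is at most $\varepsilon$ exactly when $p\geq\frac{3k+4}{2k}$. A direct computation
\begin{equation*}
\frac{2k+2}{k}-\frac{1}{k(2k+1)}-\frac{3k+4}{2k}=\frac{1}{2}-\frac{1}{k(2k+1)}>0\qquad (k\geq 1)
\end{equation*}
confirms that the hypothesis on $p$ closes the recursion $\mathcal{A}_p(R)\leq K^{O(1)}+C_\varepsilon K^\varepsilon\mathcal{A}_p(R/K^{1/4})$; iterating $\approx\log_K R$ times with $K=R^{\varepsilon^{100k}}$ delivers $\mathcal{A}_p(R)\lesssim_\varepsilon R^\varepsilon$. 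The principal obstacle is precisely this bookkeeping step: the drop from $\ell^2$-decoupling to a max-over-caps costs $K^{k/4}$ in the $L^\infty$ world (whereas it was free by orthogonality in the $L^2$ proof of Section 2), and the argument works only because this loss is narrowly compensated by the Jacobian gain $K^{(3k+4)/(4p)-3k/4}$ from $L^\infty$-rescaling, with the break-even threshold for $p$ coinciding exactly with the exponent supplied by Proposition \ref{mainprop0}.
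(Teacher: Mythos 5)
Your proposal follows the same route as the paper: the same $\tilde\Omega_0\cup\tilde\Omega_1$ splitting, Proposition \ref{mainprop0} on $\tilde\Omega_0$ (which is where the exponent $\frac{2k+2}{k}-\frac{1}{k(2k+1)}$ enters), the decoupling inequality \eqref{equ:newfracDec1R} plus rescaling and induction on scales on $\tilde\Omega_1$, and the iteration of the resulting recursion. The only difference is that you make explicit the $K$-power bookkeeping (the $K^{k/4}$ loss from replacing $\ell^2$-decoupling by a max over the $\sim K^{k/2}$ caps, against the $K^{\frac{3k+4}{4p}-\frac{3k}{4}}$ gain from rescaling in $L^\infty$), which the paper suppresses when it asserts $\Vert\tilde{\mathcal E}_{\tilde\Omega_1}f\Vert_{L^p(B_R)}\leq \mathcal{C}_p(R/K^{1/4})\Vert f\Vert_{L^\infty}$; your verification that the net exponent is nonpositive for $p$ in the stated range is correct and is a worthwhile addition rather than a deviation.
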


Assume that Proposition \ref{newprop3} holds for a while, by rescaling and \eqref{equ:fracDec1R3} we have
\begin{equation}\label{equ:frac1restr3}
\Vert \mathcal{E}_{\Omega_{\lambda}}g \Vert_{L^p(B_R)}\lesssim_{\varepsilon} R^{\varepsilon}\Vert g \Vert_{L^\infty(\Omega_{\lambda})}
\end{equation}
holds for $p>\frac{2k+2}{k}-\frac{1}{k(2k+1)}$.

Combining \eqref{equ:ome0est3}, \eqref{equ:ome3est3} and \eqref{equ:frac1restr3} we get
\[\mathcal{M}_p(R)\leq K^{O(1)}+2C_\varepsilon R^{\varepsilon}+\mathcal{M}_p(\tfrac{R}{K^{1/4}}).\]
Iterating the above inequality $m\approx [\log_{K}R]$ times we derive that
\[\mathcal{M}_p(R)\lesssim_{\varepsilon}R^{\varepsilon}.\]
This completes the proof of Theorem \ref{mainthm2}.

{\bf Proof of Proposition \ref{newprop3}:}
Now we turn to prove Proposition \ref{newprop3}. Let $\mathcal{C}_p(R)$ denote the least number such that
\begin{equation}\label{equ:defapr3}
  \|\tilde{\mathcal E}_{\Omega}f\|_{L^p(B_R)}\leq \mathcal{C}_p(R)\|f\|_{L^{\infty}(\Omega)}
\end{equation}
holds for all $f\in L^{\infty}(\Omega)$.

We decompose $\Omega$ into $\tilde{\Omega}_0\bigcup \tilde{\Omega}_1$, where \[\tilde{\Omega}_0:=B^k_1\times A^k,\quad \tilde{\Omega}_1:=B^k_1\times B^k_{K^{-1/4}}\]
as in the proof of Proposition \ref{newprop}.

Since the hypersurface corresponding to the region $\tilde{\Omega}_0$ possesses nonzero Gaussian curvature with lower bounds depending only on $K$, Proposition \ref{mainprop0} implies
\begin{equation}\label{equ:newome0est3}
  \|\tilde{\mathcal E}_{\tilde{\Omega}_0}f\|_{L^p(B_R)}\leq K^{O(1)}\|f\|_{L^{\infty}(\tilde{\Omega}_0)}
\end{equation}
for $p>\frac{2k+2}{k}-\frac{1}{k(2k+1)}$.

It remains to estimate the contribution from the $\tilde{\Omega}_1$-part. We employ inequality \eqref{equ:newfracDec1R}.
\[\Vert \tilde{\mathcal{E}}_{\tilde{\Omega}_1}f \Vert_{L^p(B_R)}\lesssim_{\varepsilon} K^{\varepsilon}\Big(\sum_{\tilde{\tau}}\Vert \tilde{\mathcal{E}}_{\tilde{\tau} \times B^k_{K^{-1/4}}}f \Vert^2_{L^p(\omega_{B_R})}\Big)^{1/2},\]
where $\tilde{\tau}$ denotes $K^{-1/2}$-ball in $\mathbb{R}^{k}$.

Now we apply rescaling to the term
\[\Vert\tilde{\mathcal E}_{\tilde{\tau}\times B^k_{K^{-1/4}}}f \Vert_{L^p(B_R)}.\]
By the similar calculation as in the proof of Theorem \ref{thm:main},
\[\Vert\tilde{\mathcal E}_{\tilde{\tau}\times B^k_{K^{-1/4}}}f \Vert_{L^p(B_R)}\]
reduces to
\[\Vert \bar{\mathcal{E}}_{\Omega}\bar{f} \Vert_{L^p(B_{\frac{R}{K^{1/4}}})},\]
where $\bar{\mathcal{E}}$ denotes the extension operator associated with the new phase function
\[\varphi_1(\zeta)-\vert \omega \vert^4,\;(\zeta,\omega)\in \Omega.\]
We know that $\varphi_1$ is also admissible
in $\mathbb{R}^{k+1}$. So we can apply induction on scales to
\[\Vert \bar{\mathcal{E}}_{\Omega}\bar{f} \Vert_{L^p(B_{\frac{R}{K^{1/4}}})},\]
and get
\[\Vert \tilde{\mathcal{E}}_{\tilde{\Omega}_1}f \Vert_{L^p(B_R)}\leq \mathcal{C}_p(\tfrac{R}{K^{1/4}})\Vert f \Vert_{L^{\infty}(\tilde{\Omega}_1)}.\]
This together with \eqref{equ:newome0est3} yields
\[\mathcal{C}_p(R)\leq K^{O(1)}+C_\varepsilon\mathcal{C}_p(\tfrac{R}{K^{1/4}}).\]
Iterating the above inequality $m\approx [\log_{K}R]$ times we get
\[\mathcal{C}_p(R)\lesssim_{\varepsilon}R^{\varepsilon}\]
as desired.

\vskip 0.2cm

Finally, we give a remark on the case $k=1$.
\begin{remark}\label{rek:kequals1}
Using the same argument as in Section 2 and applying the restriction estimate of Guo-Oh in \cite{GO20} to the $\Omega_0$ case, one can deduce that the inequality
\begin{equation*}
\|\mathcal E_{B^k_1\times B^k_1}g\|_{L^{p}(B_R)}\leq C(\varepsilon)R^{\varepsilon}\|g\|_{L^{\infty}(B^k_1\times B^k_1)}
\end{equation*}
holds for all $p>3.5$ when $k=1$.
\end{remark}

\vskip 0.12in


\subsection*{Declarations} :

$\bullet$ Funding:  This work is supported by the National Key Research and Development Program
of China (No.2021YFA1002500). J. Zheng was supported by NSFC
Grant 12271051 and Beijing Natural Science Foundation 1222019.

$\bullet$  Conflict of interest: There is no conflict of interest.

\begin{center}

\end{center}


\begin{thebibliography}{99}


\bibitem{Barron} A. Barron, Restriction estimates for hyperbolic paraboloids in higher dimensions via bilinear estimates, arXiv:2002.09001. To appear in Rev. Mat. Iberoam. DOI 10.4171/RMI/1310.



\bibitem{Bo91} J. Bourgain, Besicovitch-type maximal operators and applications to Fourier analysis, Geometric and Functional Analysis, 22(1991), 147-187.


\bibitem{BD15} J. Bourgain and C. Demeter, The proof of the $\ell^2$ decoupling conjecture, Ann. of Math., 182(2015), 351-389.


\bibitem{BG} J. Bourgain and L. Guth, Bounds on oscillatory integal operators based on multilinear estimates, Geometric and Functional Analysis, 21(2011), 1239-1295.

\bibitem{Bus} S. Buschenhenke, A sharp $L^p-L^q$ Fourier restriction theorem for a conical surface of finite type, Math. Z., 280(2015), 367-399.


\bibitem{BMV} S. Buschenhenke, D. M\"{u}ller and A. Vargas, A Fourier restriction theorem for a two-dimensional surface of finite type, Anal. PDE, 10(2017), 817-891.

\bibitem{BMV20a} S. Buschenhenke, D. M\"{u}ller and A. Vargas, Fourier restriction for smooth hyperbolic 2-surfaces, Math. Ann. (2022). https://doi.org/10.1007/s00208-022-02445-1


\bibitem{CL17} C. Cho and J. Lee, Improved restriction estimate for hyperbolic surfaces in $\mathbb{R}^3$, J. Funct. Anal., 273(2017), 917-945.


\bibitem{F70} C. Fefferman, Inequalities for strongly singular convolution operators, Acta Math., 124(1970), 9-36.


\bibitem{Gl81} A. Greenleaf, Principal curvature and harmonic analysis, Indiana Univ. Math. J., 30(4)(1981), 519-537.


\bibitem{GO20} S. Guo and C. Oh, A restriction estimate for surfaces with negative Gaussian curvatures, arXiv: 2005.12431v2, 2020.


\bibitem{Guth1} L. Guth, A restriction estimate using polynomial partitioning, J. Amer. Math. Soc., 29(2016),
371-413.

\bibitem{Guth2} L. Guth, Restriction estimates using polynomial partitioning II, Acta. Math., 221(2018), 81–142.


\bibitem{Guth18} L. Guth, Lecture 7: Decoupling, 7-8, http://math.mit.edu/lguth/Math118.html.


\bibitem{GHI} L. Guth, J. Hickman and M. Iliopoulou, Sharp estimates for oscillatory integral operators via polynomial partitioning, Acta Math., 223(2019), 251-376.


\bibitem{HI} J. Hickman and M. Iliopoulou, Sharp $L^p$ estimates for oscillatory integral operators of arbitrary signature, Math. Zeit.,   301(2022), 1143-1189.


\bibitem{HR} J. Hickman and M. Rogers, Improved Fourier restriction estimates in higher dimensions, Cambridge Journal of Mathematics, 7(2019), 219-282.


\bibitem{HZ} J. Hickman and J. Zahl, A note on Fourier restriction and nested polynomial Wolff axioms, arXiv:2010.02251. To appear in J. d'Analyse Math.


\bibitem{IKM10} I. A. Ikromov, M. Kempe and D. M\"{u}ller, Estimates for maximal functions associated with hypersurfaces in $\mathbb{R}^3$ and related problems in harmonic analysis, Acta Math., 204(2010), 151-271.


\bibitem{IM11} I. A. Ikromov and D. M\"{u}ller, Uniform estimates for the Fourier transform of surface carried measures in $\mathbb{R}^3$ and an application to Fourier restriction, J. Fourier Anal. Appl., 17(2011), 1292-1332.


\bibitem{IM15} I. A. Ikromov ans D. M\"{u}ller, Fourier restriciton for hypersurfaces in three dimensionals and Newton polyhedra, Annals of Mathematics Studies, AM-194,  Princeton University Press, 2016.


\bibitem{KR19} N. H. Katz and K. M. Rogers, On the Polynomial Wolff Axioms, Geometric and Functional Analysis, 28(2018),  1706-1716.


\bibitem{Lee06} S. Lee, Bilinear restriction estimates for surfaces with curvatures of different signs, Trans. Amer. Math. Soc. 358(2006), 3511-3533.


\bibitem{LMZ} Z. Li, C. Miao and J. Zheng, A restriction estimate for a certain surface of finite type in $\mathbb{R}^3$, J. Fourier Anal. Appl., 27(2021):63.

\bibitem{LiZheng} Z. Li and J. Zheng, $\ell^2$ decoupling for certain surfaces of finite type in $\mathbb{R}^3$, arXiv:2109.11998. To appear in Acta Mathematica Sinica, English Series.


\bibitem{Stein} E. M. Stein, Some problems in harmonic analysis, Harmonic analysis in Euclidean spaces, Proceedings of the Symposium in Pure and
Applied Mathematics, 35 (1979), Part I, 3-20.


\bibitem{Stein86} E. M. Stein, Ocillatory integrals Fourier analysis, Beijing Lectures in Harmonic Analysis, Princeton Univ. Press, 1986.


\bibitem{Stein93} E. M. Stein, Harmonic Analysis, Princeton Univ. Press, 1993.


\bibitem{Stova19} B. Stovall, Scale invariant Fourier restriction to a hyperbolic surface, Anal. PDE 12(2019), 1215-1224.


\bibitem{Tao99} T. Tao, The Bochner-Riesz conjecture implies the restriction conjecture, Duke Math. J., 96(1999), 363-375.


\bibitem{Tao03} T. Tao, A sharp bilinear restriction estimate for paraboloids, Geometric and Functional Analysis,
13(2003), 1359-1384.


\bibitem{Tomas} P. A. Tomas, A restriction theorem for the Fourier transform, Bull. Amer. Math. Soc., 81(1975), 477-478.


\bibitem{Vargas05} A. Vargas, Restriction theorems for a surface with negative curvature, Math.Zeit., 249(2005), 97-111.


\bibitem{Wang} H. Wang, A restriction estimate in $\mathbb{R}^3$ using brooms, Duke Mathematical Journal,
171(2022), 1749-1822.

\bibitem{WW22} H. Wang and S. Wu, An improved restriction estimate in $\R^3$, arXiv:2210.03878.


\bibitem{Z74} A. Zygmund, On Fourier coefficients and transforms of functions of two variables, Studia Math., 50(1974), 189-201.




\end{thebibliography}
\end{document}